\def\pa{\partial}
\def\nb{\nabla}
\def\Ga{\Gamma}
\def\Om{\Omega}
\def\al{\alpha}
\def\be{\beta}
\def\ga{\gamma}
\def\de{\delta}
\def\la{\lambda}
\def\vp{\varphi}
\def\om{\omega}
\def\BR{{\mathbb R}}
\def\f{\frac}
\def\rd{\mathrm d}
\newcommand{\lc}
{\mathrel{\raise2pt\hbox{${\mathop<\limits_{\raise1pt\hbox
{\mbox{$\sim$}}}}$}}}
\newcommand{\gc}
{\mathrel{\raise2pt\hbox{${\mathop>\limits_{\raise1pt\hbox{\mbox{$\sim$}}}}$}}}
\newcommand{\ec}
{\mathrel{\raise2pt\hbox{${\mathop=\limits_{\raise1pt\hbox{\mbox{$\sim$}}}}$}}}
\newtheorem{lemma}{Lemma}[section]
\newtheorem{theorem}{Theorem}[section]
\newtheorem{remark}{Remark}[section]
\newtheorem{prob}{Problem}[section]
\newtheorem{algorithm}{Algorithm}[section]
\newtheorem{example}{Example}[section]
\numberwithin{equation}{section}
\title{Numerical Reconstruction of the Spatial Component in the Source Term of a Time-Fractional Diffusion Equation}
\author{Daijun Jiang\footnote{School of Mathematics and Statistics \& Hubei Key Laboratory of Mathematical Sciences, Central China Normal University, Wuhan 430079, China. E-mail: jiangdaijun@mail.ccnu.edu.cn}\and
Yikan Liu\footnote{Corresponding author. Research Center of Mathematics for Social Creativity, Research Institute for Electronic Science, Hokkaido University, N12W7, Kita-Ward, Sapporo 060-0812, Japan. E-mail: ykliu@es.hokudai.ac.jp}\and
Dongling Wang\footnote{Department of Mathematics and Center for Nonlinear Studies, Northwest University, Xi'an, Shaanxi 710075, China. E-mail: wdymath@nwu.edu.cn}}
\date{}
\begin{document}
\maketitle

\centerline{\bf ABSTRACT}
\begin{center}\begin{minipage}[t]{13cm} 
In this article, we are concerned with the analysis on the numerical reconstruction of the spatial component in the source term of a time-fractional diffusion equation. This ill-posed problem is solved through a stabilized nonlinear minimization system by an appropriately selected Tikhonov regularization. The existence and the stability of the optimization system are demonstrated. The nonlinear optimization problem is approximated by a fully discrete scheme, whose convergence is established under a novel result verified in this study that the $H^1$-norm of the solution to the discrete forward system is uniformly bounded. The iterative thresholding algorithm is proposed to solve the discrete minimization, and several numerical experiments are presented to show the efficiency and the accuracy of the algorithm.\medskip

{\bf Keywords}\ \ Time-fractional diffusion equation, Inverse source problem, Finite element method, Iterative thresholding algorithm\medskip

{\bf Mathematics subject classification (2010)}\ \ 35R11, 65M32, 41A35
\end{minipage}\end{center}

\section{Introduction}\label{sec:introduction}
\setcounter{equation}{0}

Let $T>0$ and $\Om\subset\BR^d$ ($d=1,2,3$) be a bounded convex polygonal domain. For $0<\al<1$, by $\pa_t^\al$ we denote the Caputo derivative defined as (see, e.g., \cite[p.\! 91]{KST06})
\[\pa_t^\al g(t):=\f1{\Ga(1-\al)}\int_0^t\f{g'(s)}{(t-s)^\al}\,\rd s,\]
where $\Ga(\,\cdot\,)$ is the usual Gamma function. In this paper, we consider the following initial-boundary value problem for a time-fractional diffusion equation (TFDE) with the homogeneous Neumann boundary condition
\begin{equation}\label{ch1}
\begin{cases}
(\pa_t^\al u-\triangle+1)u(x,t)=f(x)\mu(t), & (x,t)\in Q:=\Om\times(0,T),\\
u(x,0)=0, & x\in\Om,\\
\pa_\nu u(x,t)=0, & (x,t)\in\pa\Om\times(0,T),
\end{cases}
\end{equation}
where $\pa_\nu u:=\nb u\cdot\nu$ and $\nu=(\nu_1,\ldots,\nu_d)$ denotes the unit outward normal derivative on the boundary $\pa\Om$. The source term in \eqref{ch1} takes the form of separated variables, in which the spatial component $f(x)$ models the spatial distribution e.g.\! of the contaminant source, and the temporal component $\mu(t)$ describes the time evolution pattern.

The governing equation in \eqref{ch1} is a typical representative of a wide range of TFDEs, which were proposed as a powerful candidate for describing anomalous diffusion phenomena in heterogenous media. In the past decades, TFDEs have been investigated thoroughly from both theoretical and numerical aspects. It reveals from \cite{SY11a,LRY16,JLLY17} and other literature that TFDEs resemble their integer counterpart (i.e., $\al=1$) in such qualitative aspects like analyticity and maximum principle, but show certain difference in the senses of limited smoothing effect, short-/long-time asymptotic behavior and weak unique continuation. For the numerical simulation of TFDEs, we refer e.g.\! to \cite{lin07,Li18,NK18,jin16,Stynes}.

Simultaneously, various kinds of inverse problems for TFDEs have also gathered consistent popularity within the last decade. Owing to the limited smoothing property of the forward problem, usually the ill-posedness of inverse problems for TFDEs is less severe than that for the case of $\al=1$, which can be witnessed from the backward problems. Nevertheless, due to the lack of techniques for analysis, for some inverse problems, one should relate TFDEs with other types of equations, so that one can indirectly obtain uniqueness and sometimes stability. For the inverse source problems on determining the spatial component, it turns out that the majority of existing literature dealt with the final observation data $u(\,\cdot\,,T)$ except for \cite{ZX11}. We refer to Sakamoto and Yamamoto \cite{SY11b} for the generic well-posedness, and \cite{WYH13,WW14} etc.\! for the numerical reconstruction by various regularization methods. Recently, by a newly established unique continuation property for TFDEs, Jiang et al. \cite{JLLY17} proved the uniqueness for the same problem by the partial interior observation and developed an iterative thresholding algorithm. On reconstructing the temporal component in the source term, we refer e.g.\! to \cite{RW16,WLL16,LZ17}. For a complete bibliography on inverse problems for TFDEs, see the topical review articles \cite{JR15,LiuLiY19,LiLiuY19,LY19}.

In the same direction of \cite{JLLY17}, in this paper we mainly focus on the numerical aspect of the following inverse source problem.

\begin{prob}\label{prob-ISP}
Let $\om\subset\Om$ be a nonempty open subdomain and $u(f)$ be the solution to \eqref{ch1}. Provided that $\mu$ is known on $[0,T]$, determine the spatial component $f$ in $\Om$ by the partial interior observation of $u(f)$ in $\om\times(0,T)$.
\end{prob}

In fact, here we can also consider a more general formulation such as
\[
\pa_t^\al u(x,t)-\sum_{i,j=1}^d\f\pa{\pa x_i}\left(a_{ij}(x)\f{\pa u(x,t)}{\pa x_j}\right)+c(x)u(x,t)=f(x)\mu(t),
\]
where we assume certain regularity of $a_{ij}$ and $c$, and $(a_{ij})_{d\times d}$ is a strictly positive-definite matrix on $\overline\Om\,$. However, we restrict ourselves to the formulation \eqref{ch1} not only for its simplicity in the numerical simulation, but also due to the belief that the underlying ill-posedness are essentially the same.

On the other hand, we notice that the iteration method proposed in \cite{JLLY17} lacks detailed analysis especially on the convergence issue, although the numerical performance was discussed. From the practical viewpoint, it is obligatory to reformulate Problem \ref{prob-ISP} in a discrete setting and investigate whether the resulting system inherits the corresponding properties of the continuous one. Moreover, we should verify the convergence of the discretized solution to the continuous one in some sense. This motivates the central topic of the present article, which turns out to be a very important and necessary supplementation to the investigation of Problem \ref{prob-ISP}.

To fully discretize system \eqref{ch1}, we shall employ the standard Galarkin method with piecewise linear finite element in space and the L1 scheme in time. This method is one of the most popular and successful numerical scheme for discretizing the subdiffusion problem, and it has been analyzed from various aspects. The optimal error estimate in $L^2(\Omega)$ norm with respect to the regularity of the problem solution is established in \cite{jin16} for uniform time step size. In order to deal with the weak singularity of time-fractional diffusion equations, the L1-type scheme on graded time meshes is employed in the discrete scheme, and the optimal error estimate in $L^{\infty}(\Omega)$ norm is established for $d=1$ in \cite{Stynes} and $d=2,3$ in \cite{NK18}. For the convergence analysis, in this paper we further establish the uniform $H^1{(\Omega)}$ error estimate on $[0, T]$  by using the Poincar\'e-Friedrichs inequality and the discrete fractional Gr\"onwall's inequality for the L1 method, which turns out to be novel to the best of our knowledge.

The remainder of this paper is organized as follows. Recalling key ingredients in theory, in Section \ref{sec-Tik} we interpret Problem \ref{prob-ISP} as an optimization problem with Tikhonov regularization, and demonstrate its regularizing effects. In Section \ref{sec-FEM}, we discretize the optimization problem by a fully discrete finite element approximation and study its basic properties. Next, Section \ref{sec-cov} is devoted to the convergence analysis of the solution to the discrete optimization problem. Finally, we propose the iterative thresholding algorithm to solve the discrete problem in Section \ref{sec-numer}, and implement several numerical examples to show the efficiency and accuracy of the algorithm.

\section{Preliminary and Tikhonov regularization}\label{sec-Tik}

In this section, we shall make general preparations and formulate Problem \ref{prob-ISP} stated in Section \ref{sec:introduction} as a stabilized minimization system, and establish the unique existence of the solution as well as the stability of the minimization formulation.

Let $L^2(\Om)$ be a usual $L^2$-space equipped with the inner product $(\,\cdot\,,\,\cdot\,)$, and $H^\al(0,T)$, $H^\ga(\Om)$ ($\ga\in\BR$) etc.\! denote Sobolev spaces (e.g., Adams \cite{adams75}). Throughout this paper, $C>0$ stands for generic constants which may change line by line. In a Banach space $X$, we denote the weak convergence of a sequence $\{z_n\}$ to $z$ by
\[
z_n\rightharpoonup z\quad\mbox{in }X\mbox{ as }n\to\infty.
\]
We first revisit some basic facts concerning the initial-boundary value problem \eqref{ch1}. For the solution regularity, we define the fractional Sobolev spaces ${_0}H^\al(0,T)$ as
\[
{_0}H^\al(0,T):=\left\{\!\begin{alignedat}2
& H^\al(0,T), & \quad & 0<\al<\f12,\\
& \left\{g\in H^\f12(0,T);\int_0^T\f{|g(t)|^2}t\,\rd t<\infty\right\}, & \quad & \al=\f12,\\
& \{g\in H^\al(0,T);\,g(0)=0\}, & \quad & \f12<\al<1.
\end{alignedat}\right.
\]
We also define the Riemann-Liouville integral operator $J_{0+}^\al$ and the backward Riemann-Liouville integral operator $J^\al_{T-}$ respectively as
\[
J_{0+}^\al g(t):=\f1{\Ga(\al)}\int_0^t\f{g(\tau)}{(t-\tau)^{1-\al}}\,\rd\tau,\quad J^\al_{T-}g(t)=\f1{\Ga(\al)}\int_t^T\f{g(\tau)}{(\tau-t)^{1-\al}}\,\rd\tau
\]
for $g\in L^2(0,T)$. We know that the Caputo derivative can be rewritten as $\pa_t^\al g=J_{0+}^{1-\al}\f\rd{\rd t}g$, and the following relation holds (see \cite[Lemma 4.1]{JLLY17})
\begin{equation}\label{li1}
\int_0^T(J_{0+}^\al g_1)\,g_2\,\rd t=\int_0^T g_1\,J^\al_{T-}g_2\,\rd t,\quad\forall\,g_1,g_2\in L^2(0,T).
\end{equation}
For later use, we show a technical lemma on the basis of \eqref{li1}

\begin{lemma}\label{lem-new}
Let $g\in{_0}H^\al(0,T)\cap C[0,T]$ and $h\in C^1[0,T]$ with $J_{T-}^{1-\al}h(T)=0$. Then
\[
\int_0^T(\pa_t^\al g)\,h\,\rd t=-g(0)\,J_{T-}^{1-\al}h(0)-\int_0^T g\,(J_{T-}^{1-\al}h)'\,\rd t.
\]
\end{lemma}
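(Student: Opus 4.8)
The plan is to reduce the identity to elementary calculus and then recover the stated regularity by approximation. First I would establish it for smooth $g$, say $g\in C^1[0,T]$, so that $g'\in L^2(0,T)$ and the Caputo derivative is literally $\pa_t^\al g=J_{0+}^{1-\al}g'$. Then
\[
\int_0^T(\pa_t^\al g)\,h\,\rd t=\int_0^T(J_{0+}^{1-\al}g')\,h\,\rd t=\int_0^T g'\,(J_{T-}^{1-\al}h)\,\rd t,
\]
where the second equality is \eqref{li1} applied with order $1-\al\in(0,1)$ and with $g_1=g'$, $g_2=h$, both of which belong to $L^2(0,T)$.

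Next I would integrate by parts. Writing $\phi:=J_{T-}^{1-\al}h$,
\[
\int_0^T g'\,\phi\,\rd t=g(T)\,\phi(T)-g(0)\,\phi(0)-\int_0^T g\,\phi'\,\rd t.
\]
The hypothesis $J_{T-}^{1-\al}h(T)=0$ removes the boundary term at $T$ (this is in fact automatic for continuous $h$, since $|\phi(t)|\le\Ga(2-\al)^{-1}\|h\|_{C[0,T]}(T-t)^{1-\al}\to0$ as $t\to T^-$), leaving exactly $-g(0)\,J_{T-}^{1-\al}h(0)-\int_0^T g\,(J_{T-}^{1-\al}h)'\,\rd t$, which is the asserted formula.

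Two regularity points need care. The first is that the integration by parts is legitimate, i.e. that $\phi$ is absolutely continuous: differentiating $\phi(t)=\Ga(1-\al)^{-1}\int_t^T h(\tau)(\tau-t)^{-\al}\,\rd\tau$ gives
\[
\phi'(t)=-\f{h(T)}{\Ga(1-\al)(T-t)^\al}+\f1{\Ga(1-\al)}\int_t^T\f{h'(\tau)}{(\tau-t)^\al}\,\rd\tau,
\]
whose only singularity is the integrable factor $(T-t)^{-\al}$; hence $\phi'\in L^1(0,T)$, $\phi$ is absolutely continuous, and $\int_0^T g\,\phi'\,\rd t$ makes sense because $g\in C[0,T]$ is bounded.

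The main obstacle is the second point: for $g\in{_0}H^\al(0,T)\cap C[0,T]$ with $\al\le\f12$, the classical derivative $g'$ need not exist, so $\pa_t^\al g$ has to be read as the bounded extension of the Caputo operator from smooth functions to ${_0}H^\al(0,T)$. I would therefore choose smooth approximants $g_n$ with $g_n\to g$ simultaneously in ${_0}H^\al(0,T)$ and uniformly on $[0,T]$, apply the identity to each $g_n$, and pass to the limit: the left-hand side converges by continuity of $\pa_t^\al:{_0}H^\al(0,T)\to L^2(0,T)$ together with $h\in L^2(0,T)$; the boundary term converges since $g_n(0)\to g(0)$; and $\int_0^T g_n\,\phi'\,\rd t\to\int_0^T g\,\phi'\,\rd t$ by uniform convergence together with $\phi'\in L^1(0,T)$. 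Securing this joint (fractional-Sobolev and uniform) approximation is the delicate part; for $\f12<\al<1$ it is unnecessary, since then ${_0}H^\al(0,T)\hookrightarrow C[0,T]$ forces $g(0)=0$ and the boundary term vanishes outright.
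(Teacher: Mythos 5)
Your proposal is correct and follows essentially the same route as the paper's own proof: regularize $g$ by smooth functions, apply the adjoint relation \eqref{li1} to move $J_{0+}^{1-\al}$ onto $h$, integrate by parts using $J_{T-}^{1-\al}h(T)=0$, and pass to the limit. Your additional checks (the absolute continuity of $J_{T-}^{1-\al}h$ with $(J_{T-}^{1-\al}h)'\in L^1(0,T)$, and the observation that the vanishing at $t=T$ is automatic for continuous $h$) are correct refinements that the paper leaves implicit.
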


\begin{proof}
Pick a sequence $\{g_n\}\subset C^\infty[0,T]$ such that $g_n\to g$ in ${_0}H^\al(0,T)\cap C[0,T]$. Then for each $n=1,2,\ldots$, we utilize \eqref{li1} and calculate
\begin{align*}
\int_0^T(\pa_t^\al g_n)\,h\,\rd t & =\int_0^T(J_{0+}^{1-\al}g_n')\,h\,\rd t=\int_0^T g_n'\,J_{T-}^{1-\al}h\,\rd t=\left.g_n\,J_{T-}^{1-\al}h\right|_{t=0}^{t=T}-\int_0^T g_n\,(J_{T-}^{1-\al}h)'\,\rd t\\
& =-g_n(0)\,J_{T-}^{1-\al}h(0)-\int_0^T g_n\,(J_{T-}^{1-\al}h)'\,\rd t,
\end{align*}
where the last equality follows from the assumption $J_{T-}^{1-\al}h(T)=0$. Then the proof is completed by passing $n\to\infty$ on both sides of the above identity.
\end{proof}

On the basis of \cite[Lemma 2.4]{JLLY17} and \cite[Theorem 2.2(i)]{SY11a}, we summarize the well-posedness of the forward problem \eqref{ch1} as follows.

\begin{lemma}\label{lem:y1}
Let $f\in L^2(\Om)$ and $\mu\in L^\infty(0,T)$. Then the initial-boundary value problem \eqref{ch1} admits a unique solution $u(f)\in{_0}H^\al(0,T;L^2(\Om))\cap C([0,T];L^2(\Om))\cap L^2(0,T;H^2(\Om))$. Moreover, there exists a constant $C>0$ depending on $\Om,T,\al$ and $\mu$ such that
\[
\|u(f)\|_{H^\al(0,T;L^2(\Om))}+\|u(f)\|_{C([0,T];L^2(\Om))}+\|u(f)\|_{L^2(0,T;H^2(\Om))}\le C\|f\|_{L^2(\Om)}.
\]
\end{lemma}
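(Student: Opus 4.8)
The plan is to construct the solution by separation of variables and then extract the three estimates from the $L^2$-maximal regularity of the fractional evolution, quoting the cited lemmas where they apply verbatim. First I would diagonalize the self-adjoint elliptic operator $A:=-\triangle+1$ with domain $D(A)=\{v\in H^2(\Om):\pa_\nu v=0\text{ on }\pa\Om\}$, producing an orthonormal basis $\{\vp_k\}_{k=1}^\infty$ of $L^2(\Om)$ and eigenvalues $1\le\lambda_1\le\lambda_2\le\cdots\to+\infty$. The structurally important point is that the zeroth-order term $+1$ shifts the spectrum of the pure Neumann Laplacian (whose least eigenvalue is $0$) so that $\lambda_k\ge1$ for every $k$; this strict positivity makes $A$ boundedly invertible and removes the zero-mode degeneracy otherwise inherent to Neumann conditions. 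With zero initial data, Duhamel's principle then gives the representation
\[
u(f)(\,\cdot\,,t)=\sum_{k=1}^\infty(f,\vp_k)\,w_k(t)\,\vp_k,\qquad w_k(t):=\int_0^t(t-s)^{\al-1}E_{\al,\al}(-\lambda_k(t-s)^\al)\,\mu(s)\,\rd s,
\]
where $E_{\al,\al}$ denotes the two-parameter Mittag-Leffler function.

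Next I would read off each norm. Applying the $L^2$-maximal regularity of \cite[Theorem 2.2(i)]{SY11a} to the source $F:=f\mu$ yields $\|\pa_t^\al u(f)\|_{L^2(Q)}\le C\|F\|_{L^2(Q)}$; since $u(f)(\,\cdot\,,0)=0$ we have $u(f)=J_{0+}^\al(\pa_t^\al u(f))$, and the norm equivalence between ${_0}H^\al(0,T;L^2(\Om))$ and the range of $J_{0+}^\al$ converts this into the desired $H^\al(0,T;L^2(\Om))$ bound, while $C([0,T];L^2(\Om))$-continuity follows from the same regularity. For the spatial estimate I would use the governing relation $Au(f)=F-\pa_t^\al u(f)$: both terms on the right lie in $L^2(Q)$, so elliptic regularity for $A$ on the \emph{convex} polygonal domain $\Om$ gives $\|u(f)\|_{L^2(0,T;H^2(\Om))}\le C\|Au(f)\|_{L^2(Q)}\le C\|F\|_{L^2(Q)}$; this is precisely the content of \cite[Lemma 2.4]{JLLY17}. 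Finally $\|F\|_{L^2(Q)}\le\sqrt T\,\|\mu\|_{L^\infty(0,T)}\|f\|_{L^2(\Om)}$, so the three estimates combine into the claimed inequality with $C=C(\Om,T,\al,\mu)$ depending on $\mu$ only through $\|\mu\|_{L^\infty(0,T)}$.

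The step I expect to be most delicate is the $L^2(0,T;H^2(\Om))$ bound, equivalently the control of $\int_0^T\sum_k\lambda_k^2|(f,\vp_k)|^2|w_k(t)|^2\,\rd t$: because $\lambda_k^2\to\infty$, any pointwise-in-$t$ estimate on $w_k$ (such as $|w_k(t)|\le C\lambda_k^{-1}\ln(1+\lambda_k t^\al)$) is too crude to be summable. The correct device is Young's convolution inequality applied to $w_k=\kappa_k*\mu$ with kernel $\kappa_k(\tau)=\tau^{\al-1}E_{\al,\al}(-\lambda_k\tau^\al)$, giving $\|w_k\|_{L^2(0,T)}\le\|\kappa_k\|_{L^1(0,T)}\|\mu\|_{L^2(0,T)}$; the identity $\f\rd{\rd\tau}E_\al(-\lambda_k\tau^\al)=-\lambda_k\tau^{\al-1}E_{\al,\al}(-\lambda_k\tau^\al)$ together with $0\le E_\al(-\lambda_k\tau^\al)\le1$ then yields the sharp value $\|\kappa_k\|_{L^1(0,T)}=\lambda_k^{-1}(1-E_\al(-\lambda_k T^\al))\le\lambda_k^{-1}$. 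The resulting factor $\lambda_k^{-2}$ exactly cancels the offending $\lambda_k^2$, and the $k$-sum collapses to $\|\mu\|_{L^2(0,T)}^2\|f\|_{L^2(\Om)}^2$. This computation also clarifies why the $H^2$ statement must be integrated in time: the kernel $\kappa_k$ is singular at $\tau=0$, reflecting the limited smoothing of the fractional operator, so no $H^2$-bound uniform in $t$ can hold, whereas the mere $L^1$-integrability of $\kappa_k$ suffices for the $L^2$-in-time norm. As \cite[Lemma 2.4]{JLLY17} and \cite[Theorem 2.2(i)]{SY11a} already encapsulate these estimates for $f\in L^2(\Om)$ and $\mu\in L^\infty(0,T)$, the remaining task is to assemble the pieces and verify the stated dependence of $C$.
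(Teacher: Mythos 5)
Your proposal is correct and matches the paper's treatment: the paper offers no proof of this lemma, simply quoting \cite[Theorem 2.2(i)]{SY11a} and \cite[Lemma 2.4]{JLLY17}, which are established by exactly the eigenfunction-expansion and Young-convolution argument you reconstruct (including the cancellation of $\lambda_k^2$ by $\|\kappa_k\|_{L^1(0,T)}^2\le\lambda_k^{-2}$). The one point the paper does spell out, and which you gloss with ``follows from the same regularity,'' is why $u(f)\in C([0,T];L^2(\Om))$: \cite[Theorem 2.2(i)]{SY11a} gives continuity only into $H^{-2\ga}(\Om)$ for $\ga>\f d4-1$, and it is the standing restriction $d\le3$ that permits a negative $\ga$ and hence continuity into $L^2(\Om)$.
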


In \cite[Theorem 2.2(i)]{SY11a}, it was proved that the solution $u\in C([0,T];H^{-2\ga}(\Om))$ for any $\ga>\f d4-1$ provided that the source term belongs to $L^\infty(0,T;L^2(\Omega))$. Since we restrict the spatial dimensions to $d=1,2,3$ throughout this paper, one can choose negative $\ga$ and thus $u(f)$ makes sense in $C([0,T];L^2(\Om))$ under the assumptions of Lemma \ref{lem:y1}.

For the theoretical aspect of Problem \ref{prob-ISP}, we recall the uniqueness result stated in \cite[Theorem 2.6]{JLLY17}.

\begin{lemma}\label{lem-ISP}
Let $\om\subset\Om$ be an arbitrarily chosen open subdomain and $u(f)$ be the solution to \eqref{ch1}. Assume that $f\in L^2(\Om)$ and $\mu\in C^1[0,T]$ with $\mu(0)\ne0$. Then $u=0$ in $\om\times(0,T)$ implies $f=0$ in $\Om$.
\end{lemma}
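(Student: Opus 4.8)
The plan is to pass from the space-time observation to a purely spectral condition on $f$ and then invoke unique continuation for an elliptic eigenvalue problem. Let $\{(\lambda_k,\varphi_k)\}_{k\ge1}$ denote the eigensystem of $-\triangle+1$ on $\Om$ under the homogeneous Neumann condition, so that $\lambda_k\ge1$, $\lambda_k\to\infty$, and $\{\varphi_k\}$ forms an orthonormal basis of $L^2(\Om)$. Projecting \eqref{ch1} onto $\varphi_k$ and using $(-\triangle+1)\varphi_k=\lambda_k\varphi_k$, one obtains the eigenfunction expansion $u(f)(x,t)=\sum_k(f,\varphi_k)\,w_k(t)\,\varphi_k(x)$, where each temporal coefficient solves the scalar fractional ODE $\pa_t^\al w_k+\lambda_k w_k=\mu$, $w_k(0)=0$, that is, $w_k(t)=\int_0^t(t-s)^{\al-1}E_{\al,\al}(-\lambda_k(t-s)^\al)\mu(s)\,\rd s$ with $E_{\al,\al}$ the Mittag-Leffler function. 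Grouping the $\lambda_k$ into their distinct values $\{\sigma_m\}$ with $P_m$ the orthogonal projection onto the (finite-dimensional) eigenspace of $\sigma_m$, the hypothesis $u(f)=0$ in $\om\times(0,T)$ becomes, for each $x\in\om$,
\[
\sum_m (P_mf)(x)\,W_m(t)=0,\qquad t\in(0,T),
\]
where $W_m$ is the common temporal profile attached to $\sigma_m$.

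The core of the argument is to deduce from this finite-time identity that $(P_mf)|_\om=0$ for every $m$. Formally this follows by a Laplace transform in $t$: since the transform of $W_m$ is $\widehat\mu(z)/(z^\al+\sigma_m)$ and $\mu(0)\ne0$ forces $z\widehat\mu(z)\to\mu(0)$ as $z\to+\infty$, hence $\widehat\mu(z)\ne0$ for large $z$, one is led to $\sum_m (P_mf)(x)/(z^\al+\sigma_m)=0$; treating $\eta=z^\al$ as a complex variable, analytic continuation together with a residue at the simple pole $\eta=-\sigma_m$ isolates $(P_mf)(x)=0$. The delicate point is that the data are available only on the finite interval $(0,T)$ and $\mu$ is merely $C^1$, so $u(f)$ need not be analytic in $t$ and the Laplace transform cannot be applied verbatim. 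This is precisely where the weak unique continuation property for the fractional equation enters, and where Lemma \ref{lem-new} becomes useful: pairing the identity against backward test functions $h$ satisfying $J_{T-}^{1-\al}h(T)=0$ and invoking the integration-by-parts formula converts the finite-time vanishing into the spectral relations above in a rigorous, $C^1$-compatible fashion. I expect this bridging step to be the main obstacle.

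Once $(P_mf)|_\om=0$ is established for each $m$, the conclusion is routine. Each $P_mf$ is a finite linear combination of Neumann eigenfunctions sharing the eigenvalue $\sigma_m$, hence solves the second-order elliptic equation $-\triangle(P_mf)=(\sigma_m-1)(P_mf)$ in $\Om$; since it vanishes on the nonempty open set $\om$, the unique continuation principle for second-order elliptic operators forces $P_mf\equiv0$ in $\Om$. Summing over $m$ gives $f=\sum_mP_mf=0$ in $\Om$, which is the assertion. As a consistency check, note that the quickest partial information---namely $f=0$ on $\om$ itself---is immediate: on $\om\times(0,T)$ the identity $u\equiv0$ annihilates $\pa_t^\al u$, $\triangle u$ and $u$ in \eqref{ch1}, so $f(x)\mu(t)=0$ there, and $\mu(0)\ne0$ with continuity yields $f=0$ on $\om$. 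The substantive work, carried out above, is to propagate this vanishing from $\om$ to all of $\Om$.
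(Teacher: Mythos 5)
The paper does not actually prove this lemma: it is quoted verbatim from \cite[Theorem 2.6]{JLLY17}, whose proof hinges on a weak unique continuation property for time-fractional equations established there. So the relevant question is whether your blind argument is self-contained and correct. Its skeleton (eigenfunction expansion, separation of the distinct eigenvalues $\sigma_m$, then elliptic unique continuation applied to $-\triangle(P_mf)=(\sigma_m-1)P_mf$ vanishing on $\om$) is a legitimate route for this self-adjoint operator, and your closing observation that $f=0$ on $\om$ follows immediately from the equation is correct. But the argument has a genuine gap exactly where you flag it: the passage from $\sum_m(P_mf)(x)W_m(t)=0$ on $(0,T)$ to $(P_mf)|_\om=0$. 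A Laplace transform of data given only on a finite window is not defined, and Lemma \ref{lem-new} cannot rescue this --- it is a plain integration-by-parts identity and carries no unique-continuation-in-time content, so invoking it here is wishful rather than a proof step.

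The missing ingredients are concrete. Write $W_m=\rho_m*\mu$ with $\rho_m(t)=t^{\al-1}E_{\al,\al}(-\sigma_m t^\al)$, so the hypothesis reads $(R_x*\mu)(t)=0$ on $(0,T)$ with $R_x:=\sum_m(P_mf)(x)\,\rho_m$. First, the Titchmarsh convolution theorem together with $\mu(0)\ne0$ (which prevents $\mu$ from vanishing on any interval $(0,\varepsilon)$) strips off $\mu$ and yields $R_x=0$ on $(0,T)$; this is where the hypothesis $\mu(0)\ne0$ actually does its work, not in the asymptotics of $\widehat\mu$. Second, although $u$ itself is not time-analytic when $\mu$ is merely $C^1$ --- your correct objection --- the kernel $R_x$ \emph{is} analytic in $t>0$, because it is built from the homogeneous propagator $E_{\al,\al}(-\sigma_m t^\al)$ alone; hence $R_x=0$ on $(0,T)$ extends to all $t>0$, the Laplace transform becomes legitimate, and your residue computation at $\eta=-\sigma_m$ then isolates $(P_mf)(x)=0$ (with the usual care that the series converges in $L^2(\om)$ for $\eta$ off the poles). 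Without the Titchmarsh step and the shift of analyticity from $u$ to the kernel, the central deduction of your proof is unestablished, so as written the argument is incomplete at its decisive point.
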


Suppose that we are given the noisy observation data $u^\de\in L^2(\om\times(0,T))$ in practice. Usually, $u^\de$ satisfies
\[
\left\|u^\de-u(f^*)\right\|_{L^2(\om\times(0,T))}\le\de,
\]
where $f^*\in L^2(\Om)$ and $\de>0$ stand for the true source term and the noise level, respectively. To deal with the ill-posedness of Problem \ref{prob-ISP}, we still adopt a classical Tikhonov regularization methodology as that in \cite{JLLY17} to consider the following optimization problem with Tikhonov regularization
\begin{equation}\label{c2}
\min_{f\in L^2(\Om)} J(f),\quad
J(f):=\left\|u(f)-u^\de\right\|_{L^2(\om\times(0,T))}^2+\be\|f\|_{L^2(\Om)}^2,
\end{equation}
where $\be>0$ is the regularization parameter, and $u=u(f)$ satisfies the initial condition $u(x,0)=0$ and the variational system associated with the equation \eqref{ch1}: for any $\vp\in H^\al(0,T;L^2(\Om))\cap L^2(0,T;H^1(\Om))$,
\begin{equation}\label{var1}
\int_0^T\!\!\!\int_\Om(\pa_t^\al u(f)\,\vp+\nb u(f)\cdot\nb\vp+u(f)\,\vp)\,\rd x\rd t=\int_0^T\!\!\!\int_\Om f\mu\,\vp\,\rd x\rd t.
\end{equation}

Regarding the minimizer of \eqref{c2}, we first show the following unique existence result.

\begin{theorem}\label{thm:exist}
For any $u^\de\in L^2(\om\times(0,T))$, there exists a unique minimizer $f^*\in L^2(\Om)$ to the optimization problem \eqref{c2}.
\end{theorem}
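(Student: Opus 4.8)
The plan is to handle existence by the direct method of the calculus of variations and uniqueness by strict convexity, both resting on the structural observation that, owing to the homogeneous initial condition, the forward map $S\colon f\mapsto u(f)|_{\om\times(0,T)}$ is \emph{linear}, and by Lemma \ref{lem:y1} it is a bounded operator from $L^2(\Om)$ into $L^2(\om\times(0,T))$. Indeed, the estimate $\|u(f)\|_{C([0,T];L^2(\Om))}\le C\|f\|_{L^2(\Om)}$ yields $\|Sf\|_{L^2(\om\times(0,T))}\le C\|f\|_{L^2(\Om)}$ after restriction to $\om$ and integration in $t$, so that $J(f)=\|Sf-u^\de\|_{L^2(\om\times(0,T))}^2+\be\|f\|_{L^2(\Om)}^2$ is a well-defined nonnegative functional on $L^2(\Om)$.

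For existence, I would take a minimizing sequence $\{f_n\}\subset L^2(\Om)$, i.e.\ $J(f_n)\to m:=\inf_{f\in L^2(\Om)}J(f)\ge0$. Since $\be\|f_n\|_{L^2(\Om)}^2\le J(f_n)$ is bounded and $\be>0$, the sequence $\{f_n\}$ is bounded in the Hilbert space $L^2(\Om)$; by reflexivity there is a subsequence (not relabeled) with $f_n\rightharpoonup f^*$ in $L^2(\Om)$. The crux is to pass to the limit in $J$. As $S$ is bounded and linear, it is weak-to-weak continuous, whence $Sf_n\rightharpoonup Sf^*$ in $L^2(\om\times(0,T))$, and therefore $Sf_n-u^\de\rightharpoonup Sf^*-u^\de$. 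Using the weak lower semicontinuity of the norm on each Hilbert space separately, I obtain $\|Sf^*-u^\de\|_{L^2(\om\times(0,T))}^2\le\liminf_n\|Sf_n-u^\de\|_{L^2(\om\times(0,T))}^2$ and $\|f^*\|_{L^2(\Om)}^2\le\liminf_n\|f_n\|_{L^2(\Om)}^2$, so that $J(f^*)\le\liminf_n J(f_n)=m$. Hence $f^*$ attains the infimum and is a minimizer.

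For uniqueness, I would invoke strict convexity of $J$. The map $f\mapsto\|f\|_{L^2(\Om)}^2$ is strictly convex because $\be>0$, while $f\mapsto\|Sf-u^\de\|_{L^2(\om\times(0,T))}^2$ is convex, being the square of a norm precomposed with the affine map $f\mapsto Sf-u^\de$; the sum is therefore strictly convex, which forces the minimizer to be unique. Equivalently, one may note that the first-order optimality condition reads $(S^*S+\be I)f^*=S^*u^\de$, and since $S^*S\ge0$ and $\be>0$ the operator $S^*S+\be I$ is boundedly invertible on $L^2(\Om)$, yielding existence and uniqueness at one stroke.

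I expect the only genuine obstacle to be the weak-continuity step, namely the transfer of weak convergence $f_n\rightharpoonup f^*$ in $L^2(\Om)$ to weak convergence $Sf_n\rightharpoonup Sf^*$ in the observation space. This is precisely where the linearity of the forward problem (a consequence of the zero initial data) and the a priori bound of Lemma \ref{lem:y1} are indispensable: without the uniform estimate one could not guarantee boundedness of $S$, and without boundedness weak-to-weak continuity would fail. The remaining verifications --- coercivity coming from the regularization term and lower semicontinuity of Hilbert-space norms --- are standard.
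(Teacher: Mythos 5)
Your proof is correct, and it takes a genuinely different and more economical route than the paper's. The paper does not exploit the linearity of the parameter-to-state map: instead it uses the a priori bound of Lemma \ref{lem:y1} to extract a subsequence with $u(f^n)\rightharpoonup u^*$ in ${_0}H^\al(0,T;L^2(\Om))\cap L^2(0,T;H^2(\Om))$, passes to the limit in the variational identity \eqref{var1} to show that $u^*$ satisfies the weak form with source $f^*\mu$, and then invokes the fractional integration-by-parts formula of Lemma \ref{lem-new} to verify $u^*(\,\cdot\,,0)=0$, thereby identifying $u^*=u(f^*)$ before applying weak lower semicontinuity. You collapse all of this into the single observation that $S\colon f\mapsto u(f)|_{\om\times(0,T)}$ is a bounded linear operator (by Lemma \ref{lem:y1} and the zero initial data), hence weak-to-weak continuous, so $Sf_n\rightharpoonup Sf^*$ follows immediately. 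What your approach buys is brevity and a clean one-line existence-and-uniqueness statement via the normal equation $(S^*S+\be I)f^*=S^*u^\de$; what the paper's approach buys is a template that survives when the forward map is nonlinear, and it produces the identification of the limit state in the stronger topology that is reused verbatim in the stability proof (Theorem \ref{thm:stability}) and in the discrete analogue (Theorem \ref{thm:hexist}). One small point in your favor: you correctly note that uniqueness requires \emph{strict} convexity, supplied by the term $\be\|f\|_{L^2(\Om)}^2$ with $\be>0$, whereas the paper only says ``convexity''; your version of that step is the more precise one.
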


\begin{proof}
Since $J(f)$ is nonnegative, we know that $\inf_{f\in L^2(\Om)}J(f)$ is finite. Thus there exists a minimizing sequence $\{f^n\}\subset L^2(\Om)$ such that
\[
\lim_{n\to\infty}J(f^n)=\inf_{f\in L^2(\Om)}J(f).
\]
Then by the definition of $J(f^n)$, it is obvious that $\{f^n\}$ is uniformly bounded in $L^2(\Om)$. Therefore, there exist $f^*\in L^2(\Om)$ and a subsequence of $\{f^n\}$, still denoted by $\{f^n\}$, such that
\[
f^n\rightharpoonup f^*\quad\mbox{in }L^2(\Om)\mbox{ as }n\to\infty.
\]
We shall prove that $f^*$ is indeed the unique minimizer of \eqref{c2}.

Since each $f^n$ corresponds with a solution $u(f^n)$ to \eqref{ch1} with $f=f^n$, it follows immediately from Lemma \ref{lem:y1} that
the sequence $\{u(f^n)\}$ is also uniformly bounded in ${_0}H^\al(0,T;L^2(\Om))\cap L^2(0,T;H^2(\Om))$. This, together with the compact embedding $H^2(\Om)\subset\subset L^2(\Om)$, indicates the existence of some $u^*\in{_0}H^\al(0,T;L^2(\Om)\cap L^2(0,T;H^2(\Om))$ and a subsequence of $\{u(f^n)\}$, again still denoted by $\{u(f^n)\}$, such that
\begin{equation}\label{jc7}
u(f^n)\rightharpoonup u^*\quad\mbox{in }{_0}H^\al(0,T;L^2(\Om))\cap L^2(0,T;H^2(\Om)).
\end{equation}
We claim $u^*=u(f^*)$. Actually, we utilize the fact that
\begin{equation}\label{eq-var}
\int_0^T\!\!\!\int_\Om(\pa_t^\al u(f^n)\,\vp+\nb u(f^n)\cdot\nb\vp+u(f^n)\,\vp)\,\rd x\rd t=\int_0^T\!\!\!\int_\Om f^n\mu\,\vp\,\rd x\rd t
\end{equation}
for all $\vp\in L^2(0,T;H^1(\Om))$. Since \eqref{jc7} implies
\[
\pa_t^\al u(f^n)\rightharpoonup\pa_t^\al u^*,\quad\nb u(f^n)\rightharpoonup\nb u^*\quad\mbox{in }L^2(Q),
\]
we pass $n\to\infty$ in \eqref{eq-var} to obtain
\begin{equation}\label{var12}
\int_0^T\!\!\!\int_\Om(\pa_t^\al u^*\,\vp+\nb u^*\cdot\nb\vp+u^*\,\vp)\,\rd x\rd t=\int_0^T\!\!\!\int_\Om f^*\mu\,\vp\,\rd x\rd t
\end{equation}
for all $\vp\in L^2(0,T;H^1(\Om))$. Next, we shall prove $u^*(\,\cdot\,,0)=0$, which with \eqref{var12} and the definition \eqref{var1} of weak solutions implies $u^*=u(f^*)$.

Indeed, it follows from Lemma \ref{lem:y1} that $u(f^n)\in{_0}H^\al(0,T;L^2(\Om))\cap C([0,T];L^2(\Om))$, which satisfies the assumption of Lemma \ref{lem-new}. Then by taking $\psi\in C^1[0,T]$ with $J^{1-\al}_{T^-}\psi(T)=0$ and $v\in L^2(\Om)$ arbitrarily, we employ Lemma \ref{lem-new} to deduce
\[
\int_\Om\int_0^T\pa_t^\al u(f^n)\,\psi\,v\,\rd t\rd x=-\int_\Om u(f^n)(\,\cdot\,,0)\,J^{1-\al}_{T^-}\psi(0)\,v\,\rd x-\int_\Om\int_0^T u(f^n)\,\f\rd{\rd t}J^{1-\al}_{T^-}\psi\,v\,\rd t\rd x.
\]
Noting that $u(f^n)(x,0)=0$ and letting $n\to\infty$ in the above equation, we obtain
\begin{equation}\label{li2}
\int_\Om\int_0^T\pa_t^\al u^*\,\psi\,v\,\rd t\rd x=-\int_\Om\int_0^T u^*\,\f\rd{\rd t}J^{1-\al}_{T^-}\psi\,v\,\rd t\rd x.
\end{equation}
In a parallel manner, we also have
\[
\int_\Om\int_0^T\pa_t^\al u^*\,\psi\,v\,\rd t\rd x=-\int_\Om u^*(\,\cdot\,,0)\,J^{1-\al}_{T^-}\psi(0)\,v\,\rd x-\int_\Om\int_0^T u^*\,\f\rd{\rd t}J^{1-\al}_{T^-}\psi\,v\,\rd t\rd x
\]
for any $\psi\in C^1[0,T]$ with $J^{1-\al}_{T^-}\psi(T)=0$ and $v\in L^2(\Om)$, which, together with \eqref{li2}, implies that $u^*(\,\cdot\,,0)=0$.

By $f^n\rightharpoonup f^*$ in $L^2(\Om)$ and \eqref{jc7}, we employ the lower semi-continuity of the $L^2$-norm to conclude
\begin{align*}
J(f^*) & =\left\|u(f^*)-u^\de\right\|_{L^2(\om\times(0,T))}^2+\be\|f^*\|_{L^2(\Om)}^2\\
& \le\liminf_{n\to\infty}\left\|u(f^n)-u^\de\right\|_{L^2(\om\times(0,T))}^2+\be\liminf_{n\to\infty}\|f^n\|_{L^2(\Om)}^2\\
& \le\liminf_{n\to\infty}J(f^n)=\inf_{f\in L^2(\Om)}J(f),
\end{align*}
indicating that $f^*$ is indeed a minimizer to the optimization problem \eqref{c2}. Furthermore, the uniqueness of $f^*$ is readily seen from the convexity of $J(f)$.
\end{proof}

Next, we justify the stability of \eqref{c2}, namely, the minimization system \eqref{c2} is indeed a stabilization for Problem \ref{prob-ISP} with respect to the perturbation in observation data.

\begin{theorem}\label{thm:stability}
Let $\{u^\de_\ell\}\subset L^2(\om\times(0,T))$ be a sequence such that
\begin{equation}\label{eq-cov}
u^\de_\ell\to u^\de\quad\mbox{in }L^2(0,T;L^2(\om))\mbox{ as }\ell\to\infty,
\end{equation}
and $\{f^\ell\}$ be a sequence of minimizers of problems
\[\min_{f\in L^2(\Om)}J_\ell(f),\quad J_\ell(f):=\left\|u(f)-u_\ell^\de\right\|_{L^2(\om\times(0,T))}^2+\be\|f\|_{L^2(\Om)}^2,\quad\ell=1,2,\ldots.\]
Then $\{f^\ell\}$ converges strongly in $L^2(\Om)$ to the minimizer of \eqref{c2}.
\end{theorem}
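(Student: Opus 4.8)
The plan is to follow the standard route for the stability of Tikhonov regularization, reusing much of the machinery already developed in the proof of Theorem \ref{thm:exist}.

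First I would show that the minimizers $\{f^\ell\}$ are uniformly bounded in $L^2(\Om)$. Since the solution to \eqref{ch1} with source $f=0$ is identically zero, the minimality of $f^\ell$ gives
\[
\be\|f^\ell\|_{L^2(\Om)}^2\le J_\ell(f^\ell)\le J_\ell(0)=\|u_\ell^\de\|_{L^2(\om\times(0,T))}^2,
\]
and the convergence \eqref{eq-cov} bounds the right-hand side uniformly in $\ell$. Hence a subsequence (still denoted $\{f^\ell\}$) satisfies $f^\ell\rightharpoonup\tilde f$ in $L^2(\Om)$ for some $\tilde f\in L^2(\Om)$. Exactly as in Theorem \ref{thm:exist}, I would then invoke Lemma \ref{lem:y1} to bound $\{u(f^\ell)\}$ in ${_0}H^\al(0,T;L^2(\Om))\cap L^2(0,T;H^2(\Om))$, pass to the weak limit in the weak formulation \eqref{var1}, and verify the vanishing initial condition via Lemma \ref{lem-new}, so that $u(f^\ell)\rightharpoonup u(\tilde f)$; in particular $u(f^\ell)\rightharpoonup u(\tilde f)$ weakly in $L^2(\om\times(0,T))$.

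Next I would identify $\tilde f$ with the minimizer $f^*$ of \eqref{c2}. On the one hand, since $u_\ell^\de\to u^\de$ strongly, the sequence $u(f^\ell)-u_\ell^\de$ converges weakly to $u(\tilde f)-u^\de$, and the weak lower semicontinuity of the $L^2$-norm applied to both the fidelity and the penalty terms yields $J(\tilde f)\le\liminf_{\ell\to\infty}J_\ell(f^\ell)$. On the other hand, for any $g\in L^2(\Om)$ the minimality $J_\ell(f^\ell)\le J_\ell(g)$ together with $u_\ell^\de\to u^\de$ gives $\limsup_{\ell\to\infty}J_\ell(f^\ell)\le\lim_{\ell\to\infty}J_\ell(g)=J(g)$; choosing $g=f^*$ shows $J(\tilde f)\le J(f^*)$, whence $\tilde f$ minimizes $J$ and, by the uniqueness in Theorem \ref{thm:exist}, $\tilde f=f^*$.

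The step I expect to be the main obstacle is upgrading this weak convergence to strong convergence, which is where the Hilbert-space structure and the regularization term become essential. The chain of inequalities above forces $\lim_{\ell\to\infty}J_\ell(f^\ell)=J(f^*)$. Writing $J_\ell(f^\ell)=\|u(f^\ell)-u_\ell^\de\|_{L^2(\om\times(0,T))}^2+\be\|f^\ell\|_{L^2(\Om)}^2$ and using lower semicontinuity of each summand separately, a short contradiction argument rules out that either $\liminf$ strictly exceeds its limiting value, which pins down $\|f^\ell\|_{L^2(\Om)}\to\|f^*\|_{L^2(\Om)}$. Combining this norm convergence with $f^\ell\rightharpoonup f^*$ gives $\|f^\ell-f^*\|_{L^2(\Om)}^2=\|f^\ell\|_{L^2(\Om)}^2-2(f^\ell,f^*)+\|f^*\|_{L^2(\Om)}^2\to0$, i.e.\ strong convergence. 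Finally, since this reasoning applies verbatim to every subsequence and always produces the same limit $f^*$ (unique by Theorem \ref{thm:exist}), the whole sequence $\{f^\ell\}$ converges strongly to $f^*$ in $L^2(\Om)$.
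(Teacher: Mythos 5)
Your proposal is correct and follows essentially the same route as the paper's proof: uniform boundedness and weak subsequential convergence of $\{f^\ell\}$, identification of the weak limit as the minimizer of \eqref{c2} via the lower semicontinuity/minimality sandwich, and then norm convergence of $\|f^\ell\|_{L^2(\Om)}$ combined with weak convergence to obtain strong convergence. The only (welcome) refinements are that you make the uniform bound explicit by testing against $f=0$ and you add the standard subsequence-of-subsequences step to pass from a subsequence to the whole sequence, which the paper leaves implicit.
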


\begin{proof}
The unique existence of each $f^\ell$ is guaranteed by Theorem \ref{thm:exist}. By definition, we have
\[
J_\ell(f^\ell)\le J_\ell(f),\quad\forall\,f\in L^2(\Om),
\]
which implies the uniform boundedness of $f^\ell$ in $L^2(\Om)$. Hence, there exist $f^*\in L^2(\Om)$ and a subsequence of $\{f^\ell\}$, still denoted by $\{f^\ell\}$, such that
\[
f^\ell\rightharpoonup f^*\quad\mbox{in }L^2(\Om)\mbox{ as }\ell\to\infty.
\]

Now it suffices to show that $f^*$ is indeed the unique minimizer of \eqref{c2}. Actually, repeating the same argument as that in the proof of Theorem \ref{thm:exist}, we can derive
\[
u(f^\ell)\rightharpoonup u(f^*)\quad\mbox{in }{_0}H^\al(0,T;L^2(\Om))\cap L^2(0,T;H^2(\Om))\mbox{ as }\ell\to\infty,
\]
up to taking a further subsequence. Combining the above convergence with \eqref{eq-cov}, we obtain
\[
u(f^\ell)-u_\ell^\de\rightharpoonup u(f^*)-u^\de\quad\mbox{in }L^2(0,T;L^2(\om))\mbox{ as }\ell\to\infty.
\]
Therefore, we get
\begin{equation}\label{li5}
\|u(f^*)-u^\de\|_{L^2(\om\times(0,T))}^2\le\liminf_{\ell\to\infty}\left(\|u(f^\ell)-u_\ell^\de\|_{L^2(\om\times(0,T))}^2\right).
\end{equation}
For any $f\in L^2(\Om)$, again we take advantage of the lower semi-continuity of the $L^2$-norm to deduce
\begin{align}
J(f^*) & =\left\|u(f^*)-u^\de\right\|_{L^2(\om\times(0,T))}^2+\be\|f^*\|_{L^2(\Om)}^2\nonumber\\
& \le\liminf_{\ell\to\infty}\left\|u(f^\ell)-u_\ell^\de\right\|_{L^2(\om\times(0,T))}^2+\be\liminf_{\ell\to\infty}\left\|f^\ell\right\|_{L^2(\Om)}^2\nonumber\\
& \le\liminf_{\ell\to\infty}\left(\left\|u(f^\ell)-u_\ell^\de\right\|_{L^2(\om\times(0,T))}^2+\be\left\|f^\ell\right\|_{L^2(\Om)}^2\right)\nonumber\\
& \le\lim_{\ell\to\infty}\left(\left\|u(f)-u_\ell^\de\right\|_{L^2(\om\times(0,T))}^2+\be\|f\|_{L^2(\Om)}^2\right)\nonumber\\
& =\left\|u(f)-u^\de\right\|_{L^2(\om\times(0,T))}^2+\be\|f\|_{L^2(\Om)}^2=J(f),\quad\forall\,f\in L^2(\Om),\label{li4}
\end{align}
which verifies that $f^*$ is the minimizer of \eqref{c2}.

Next, we shall prove $\{f^\ell\}$ converges to $f^*$ strongly in $L^2(\Om)$ by contradiction. Assuming that it is not true, then we know that $\{\|f^\ell\|_{L^2(\Om)}\}$ does not converge to $\|f^*\|_{L^2(\Om)}$. As $f^\ell\rightharpoonup f^*$ in $L^2(\Om)$, by the weak lower semi-continuity of the norm, we have
\[
\|f^*\|_{L^2(\Om)}\le\liminf_{\ell\to\infty}\|f^\ell\|_{L^2(\Om)}.
\]
Hence, setting $A:=\limsup_{\ell\to\infty}\|f^\ell\|_{L^2(\Om)}$, we get
\begin{equation}\label{li7}
A=\limsup_{\ell\to\infty}\|f^\ell\|_{L^2(\Om)}>\liminf_{\ell\to\infty}\|f^\ell\|_{L^2(\Om)}\ge\|f^*\|_{L^2(\Om)},
\end{equation}
and there exists a subsequence $\{f^m\}$ of $\{f^\ell\}$  such that
\[
A=\lim_{m\to\infty}\|f^m\|_{L^2(\Om)}.
\]
Now taking $f=f^*$ in \eqref{li4}, we find that
\begin{align*}
\|u(f^*)-u^\de\|_{L^2(\om\times(0,T))}^2+\be\|f^*\|_{L^2(\Om)}^2 & =\liminf_{m\to\infty}\left(\|u(f^m)-u_m^\de\|_{L^2(\om\times(0,T))}^2+\be\|f^m\|_{L^2(\Om)}^2\right)\\
& =\liminf_{m\to\infty}\|u(f^m)-u_m^\de\|_{L^2(\om\times(0,T))}^2+\be A^2.
\end{align*}
This, together with \eqref{li7}, implies that
\begin{align*}
\|u(f^*)-u^\de\|_{L^2(\om\times(0,T))}^2 & =\liminf_{m\to\infty}\|u(f^m)-u_m^\de\|_{L^2(\om\times(0,T))}^2+\be\left(A^2-\|f^*\|_{L^2(\Om)}^2\right)\\
& >\liminf_{m\to\infty}\|u(f^m)-u_m^\de\|_{L^2(\om\times(0,T))}^2,
\end{align*}
which contradicts with \eqref{li5}. The proof of Theorem \ref{thm:stability} is completed.
\end{proof}

\section{Fully Discrete Finite Element Approximation}\label{sec-FEM}

In this section we propose a fully discrete finite element method for approximating the nonlinear optimization problem \eqref{c2}. We first introduce some appropriate time and space discretization. For the space discretization, we consider a shape regular triangulation $\mathcal T_h$ of $\Om$ with a mesh size $h$, consisting of tetrahedral elements (see \cite{cia78}). Then we introduce the standard nodal finite element spaces of piecewise linear functions
\begin{align*}
V_h & =\left\{v_h\in H^1(\Om);\,v_h|_A\in P_1(A),\ \forall\,A\in\mathcal T_h\right\},\\
S_h & =\left\{f_h\in L^2(\Om);\,f_h|_A\in P_1(A),\ \forall\,A\in\mathcal T_h\right\},
\end{align*}
where $P_1(A)$ is the space of linear polynomials on $A$. It is obvious that $V_h\subset S_h$.

To fully discretize the minimization problem \eqref{c2}, we also need the time discretization. To this end, we divide the time interval $[0,T]$ into $M$ subintervals by the equidistant nodal points
\[
0=t_0<t_1<\cdots<t_M=T
\]
with $t_m=m\,\tau$, where $\tau=T/M$ is the step length. Hereinafter, we additionally assume that the noisy observation data $u^\de$ satisfies $u^\de\in C([0,T];L^2(\om))$, so that each $u^\de(\,\cdot\,,t_m)$ makes sense in $L^2(\om)$.

To discretize the Caputo derivative in time, we refer to \cite{lin07,jin15} and use the following approximation:
\begin{align}
\pa_t^\al u(x,t_{m+1}) & =\f1{\Ga(1-\al)}\sum_{j=0}^m\int_{t_j}^{t_{j+1}}\f{\pa_su(x,s)}{(t_{m+1}-s)^\al}\,\rd s\nonumber\\
& =\f1{\Ga(1-\al)}\sum_{j=0}^m\f{u(x,t_{j+1})-u(x,t_j)}s\int_{t_j}^{t_{j+1}}(t_{m+1}-s)^{-\al}\,\rd s+r_\tau^{m+1}\nonumber\\
&=\f1{\Ga(2-\al)}\sum_{j=0}^md_j\f{u(x,t_{m+1-j})-u(x,t_{m-j})}{\tau^\al}+r_\tau^{m+1}\nonumber\\
& =\f1{\Ga(2-\al)\,\tau^\al} \sum_{j=0}^{m+1} \gamma_j u(x,t_{m+1-j})+r_\tau^{m+1}
:=\bar\pa_\tau^\al u(x,t_{m+1}) +r_\tau^{m+1},\label{jc8}
\end{align}
where $d_j:=(j+1)^{1-\al}-j^{1-\al}$ for $j=0,1,\ldots,m$, and $\bar\pa_\tau^\al u(x,t_{m+1})$ and $r_\tau^{m+1}$ denote the L1 numerical approximation\footnote{For historical reasons, the numerical scheme in \eqref{jc8} is often referred to as L1 numerical approximation for Caputo fractional derivative. One possible explanation is that in this scheme we replace the first derivative with a first order linear interpolation in each cell $[t_j,t_{j+1}]$, and is called L1 scheme. This name has nothing to do with the usual notation $L^1(\Om)$ for integrable function in $\Om$.} and local truncation error respectively.
The coefficients $\ga_j$ ($j=0,1,\ldots,m+1$) in the discrete convolution quadrature \eqref{jc8} are given by
\[
\gamma_{j}=\begin{cases}
1, & j=0,\\
(j+1)^{1-\al}-2j^{1-\al}+(j-1)^{1-\al}, & j=1,\ldots,m,\\
m^{1-\al}-(m+1)^{1-\al}, & j=m+1.
\end{cases}
\]

Following the same line as that in \cite{jin15}, now we arrive at a fully discrete scheme for the forward problem \eqref{ch1}:
given $u_h^0=0$, find $u_h^{m+1}\in V_h$ ($m=0,\ldots,M-1$) such that
\begin{equation}\label{zou3}
(1+b_0)\left(u_h^{m+1},\chi_h\right)+b_0\left(\nb u_h^{m+1},\nb\chi_h\right)=\sum_{j=0}^{m-1}(d_j-d_{j+1})\left(u_h^{m-j},\chi_h\right)+b_0\left(f_h\,\mu^{m+1},\chi_h\right)
\end{equation}
or equivalently, find $u_h^m\in V_h$ ($m=1,\ldots,M$) such that
\begin{equation}\label{bu1}
\left(\bar\pa_\tau^\al u_h^m, \chi_h\right)+(\nb u_h^m, \nb\chi_h)+ (u_h^m,\chi_h)=(f_h\,\mu^m, \chi_h)
\end{equation}
holds for all $\chi_h\in V_h$, where $b_0:=\Ga(2-\al)\,\tau^\al$. Here and hereinafter, we understand $f_h\in S_h$ as
some approximation of $f$ and $\mu^m=\mu(t_m)$.
To emphasize the dependency, we also denote the solution to \eqref{zou3} or \eqref{bu1} as $u_h^m(f_h)$.

On the basis of \eqref{zou3}, now we are well prepared to propose the fully discrete finite element approximation of the nonlinear optimization \eqref{c2}:
\begin{equation}\label{zou1}
\min_{f_h\in S_h}J_{h,\tau}(f_h),\quad J_{h,\tau}(f_h):=\tau\sum_{m=0}^Mc_m\left\|u_h^m(f_h)-u^\de(\,\cdot\,,t_m)\right\|_{L^2(\om)}^2+\be\|f_h\|^2_{L^2(\Om)}.
\end{equation}
Here $c_m$ ($m=0,1,\ldots,M$) are the coefficients of the composite trapezoidal rule for the time integration over $[0,T]$, i.e., $c_0=c_M=\f12$ and $c_m=1$ for $m=1,\ldots,M-1$.

Before verifying the existence of a minimizer to the discrete minimization problem \eqref{zou1}, we shall derive some useful a priori estimates for the discrete solution $u_h^m(f_h)$ to \eqref{zou3}
or \eqref{bu1}. To do so, we need the following lemmas which allows us to use the energy-like methods in the analysis.

\begin{lemma}[see \cite{WDL18}]\label{lem:Wang}
For the L1 numerical approximation \eqref{jc8} of the Caputo derivative, there holds
\begin{equation}\label{wang01}
\bar\pa_\tau^\al\|u_h^m\|_{L^2(\Om)}^2\le\left(2u_h^m,\bar\pa_\tau^\al u_h^m\right),\quad m\ge 1.
\end{equation}
\end{lemma}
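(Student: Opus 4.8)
The plan is to reduce \eqref{wang01} to a purely algebraic inequality among the L1 quadrature weights, for which nonnegativity is forced by the monotonicity of those weights. Throughout I write $\|\cdot\|$ for $\|\cdot\|_{L^2(\Om)}$ and $b_0=\Ga(2-\al)\,\tau^\al$. Shifting the index $m+1\mapsto m$ in \eqref{jc8}, for $m\ge1$ the L1 operator reads
\[
\bar\pa_\tau^\al u_h^m=\f1{b_0}\sum_{j=0}^{m-1}d_j\left(u_h^{m-j}-u_h^{m-1-j}\right),\qquad d_j=(j+1)^{1-\al}-j^{1-\al}.
\]
Applying the very same formula to the scalar sequence $\|u_h^k\|^2$ computes the left-hand side $\bar\pa_\tau^\al\|u_h^m\|^2$, and since $b_0>0$, the desired inequality \eqref{wang01} is equivalent to
\[
\sum_{j=0}^{m-1}d_j\left[\|u_h^{m-j}\|^2-\|u_h^{m-1-j}\|^2-2\left(u_h^m,\,u_h^{m-j}-u_h^{m-1-j}\right)\right]\le0.
\]

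Next I would simplify the bracket via the elementary inner-product identity $\|a\|^2-\|b\|^2-2(c,a-b)=\|a-c\|^2-\|b-c\|^2$, taken with $a=u_h^{m-j}$, $b=u_h^{m-1-j}$, $c=u_h^m$. Setting $w_k:=\|u_h^k-u_h^m\|^2\ge0$, so that in particular $w_m=0$, the bracket becomes $w_{m-j}-w_{m-1-j}$ and the whole claim collapses to
\[
\sum_{j=0}^{m-1}d_j\left(w_{m-j}-w_{m-1-j}\right)\le0.
\]

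To close the argument I would perform a summation by parts. Rearranging the telescoping sum and using $w_m=0$ to annihilate the $j=0$ boundary term yields
\[
\sum_{j=0}^{m-1}d_j\left(w_{m-j}-w_{m-1-j}\right)=\sum_{j=1}^{m-1}(d_j-d_{j-1})\,w_{m-j}-d_{m-1}\,w_0.
\]
The conclusion then follows from two facts about the weights: $d_j>0$ for every $j$, and $\{d_j\}$ is strictly decreasing, so $d_j-d_{j-1}<0$. Both are consequences of the concavity of $g(x)=x^{1-\al}$ on $(0,\infty)$ for $0<\al<1$, since $d_j=g(j+1)-g(j)$ is a forward difference of $g$. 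As every $w_k\ge0$, each summand $(d_j-d_{j-1})w_{m-j}$ is nonpositive and so is $-d_{m-1}w_0$, which gives the inequality.

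The hard part here is combinatorial rather than analytic: one must carry out the index shifts in the Abel summation with care and verify explicitly that the boundary contribution at $j=0$ vanishes (it does, precisely because $w_m=0$). The monotonicity of the $d_j$ is the structural ingredient that makes this discrete analogue of the fractional chain-rule inequality $\pa_t^\al\|u\|^2\le2(u,\pa_t^\al u)$ hold, so I would record it explicitly rather than invoke it tacitly.
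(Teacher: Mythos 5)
Your proof is correct. Note that the paper itself supplies no argument for this lemma --- it is quoted verbatim from \cite{WDL18} --- so there is no in-text proof to compare against; your derivation is a valid, self-contained replacement. Your route (completing the square via $w_k=\|u_h^k-u_h^m\|^2$ and then summing by parts against the monotone weights $d_j$) is algebraically equivalent to the standard literature proof, which instead writes $\bar\pa_\tau^\al u_h^m=b_0^{-1}\sum_{k=0}^m\ga_k u_h^{m-k}$ with $\ga_0=1$, $\ga_k=d_k-d_{k-1}<0$ for $1\le k\le m-1$, $\ga_m=-d_{m-1}<0$, $\sum_{k=0}^m\ga_k=0$, and applies $2(u_h^m,u_h^{m-k})\le\|u_h^m\|^2+\|u_h^{m-k}\|^2$ to each cross term; both arguments rest on exactly the same structural facts ($d_j>0$ and $d_j$ strictly decreasing, from concavity of $x\mapsto x^{1-\al}$). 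Your index bookkeeping checks out, including the vanishing of the $j=0$ boundary term via $w_m=0$ and the residual $-d_{m-1}w_0\le0$, which is precisely what makes the case $m=1$ (where the interior sum is empty) work as well.
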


\begin{lemma}[see \cite{Li18}]\label{lem:Li18}
Suppose that the nonnegative sequences $\{v^n\}$ and $\{g^n\}$ satisfy
\[
\bar\pa_\tau^\al v^n\le \la_1 v^n+  \la_2 v^{n-1}+g^n,\quad n\ge 1,
\]
where $\la_1,\la_2$ are given positive constants independent of the time step $\tau$.
Then there exists a positive constant $\tau_{*}>0$ such that
\[
v^n\le 2\left(v^{0}+\f{t_n^\al}{\Ga(1+\al)} \max_{0\le j\le n} g^{j}\right) E_\al(2\la\,t_n^\al)
\]
for $0<\tau<\tau_{*}$, where $E_\al(z)= \sum_{k=0}^{\infty} \f{z^{k}}{\Ga(1+k\al)}$ is the Mittag-Leffler function and $\la=\la_1+\f{\la_2}{2-2^{1-\al}}$.
\end{lemma}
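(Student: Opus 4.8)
The plan is to prove this discrete fractional Grönwall inequality by recasting the L1 operator in convolution form, exploiting the sign and monotonicity of its weights to invert the fractional difference through a complementary discrete kernel, and finally summing the resulting discrete Volterra inequality against the Mittag-Leffler function. Concretely, I would first rewrite the L1 approximation \eqref{jc8} as
\[
\bar\pa_\tau^\al v^n=\sum_{j=1}^n b_{n-j}(v^j-v^{j-1}),\qquad b_i=\f{(i+1)^{1-\al}-i^{1-\al}}{\Ga(2-\al)\,\tau^\al},
\]
and record the three structural facts that drive the whole argument: the weights $b_i$ are strictly positive, strictly decreasing in $i$, and telescope so that the diagonal weight $b_0=1/(\Ga(2-\al)\tau^\al)$ exactly balances the off-diagonal ones. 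Equivalently $\Ga(2-\al)\tau^\al\,\bar\pa_\tau^\al v^n=v^n-\sum_{k=1}^n p_k v^{n-k}$ with $p_k\ge0$ and $\sum_{k=1}^n p_k=1$, where $p_1=d_0-d_1=2-2^{1-\al}$; this convex-combination structure is what makes the scheme amenable to the analysis and is the source of the constant $2-2^{1-\al}$ in $\la$. An equivalent route would replace the kernel estimates below by a discrete comparison principle against a Mittag-Leffler supersolution, which the same convex structure supports, but I sketch the kernel route since it yields the constants most transparently.

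Next I would dispose of the two-level nature of the hypothesis. The term $\la_2 v^{n-1}$ is controlled by comparing it with the contribution of $v^{n-1}$ already present inside $\bar\pa_\tau^\al v^n$, whose weight is governed by the first gap $b_0(d_0-d_1)=(2-2^{1-\al})/(\Ga(2-\al)\tau^\al)$. This comparison lets me replace $\la_1 v^n+\la_2 v^{n-1}$ by a single effective rate $\la=\la_1+\la_2/(2-2^{1-\al})$ acting on $v^n$, the remainder being absorbed into the diagonal term $b_0 v^n$ once $\tau<\tau_*$. The smallness threshold $\tau_*$ is exactly what guarantees this absorption, and the doubling incurred in the process is the origin of the overall factor $2$ and of the argument $2\la\,t_n^\al$ rather than $\la\,t_n^\al$ in the final estimate.

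I would then invert the resulting one-level fractional difference by means of the complementary discrete kernel $\{\theta_j\}$ defined through the convolution identity $\sum_{j=k}^n\theta_{n-j}\,b_{j-k}=1$ for $1\le k\le n$. Using the monotonicity of $\{b_i\}$ one verifies $\theta_j\ge0$ together with the uniform bound $\sum_{j=1}^n\theta_{n-j}\lc t_n^\al/\Ga(1+\al)$, which is the discrete surrogate of the Riemann-Liouville integral of order $\al$. Convolving the reduced inequality against $\{\theta_j\}$ converts it into a discrete Volterra inequality of the form $v^n\le v^0+\f{t_n^\al}{\Ga(1+\al)}\max_{0\le j\le n}g^j+2\la\sum_{k=1}^n\theta_{n-k}v^k$; iterating this inequality and summing the resulting series term by term reproduces exactly the Mittag-Leffler series defining $E_\al(2\la\,t_n^\al)$, which gives the stated bound.

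The main obstacle will be this last step: constructing $\{\theta_j\}$ and proving the uniform-in-$(n,\tau)$ estimates $\theta_j\ge0$ and $\sum_{j=1}^n\theta_{n-j}\lc t_n^\al/\Ga(1+\al)$, since these are the quantitative heart of the argument and the point at which the Mittag-Leffler function is forced upon us. The positivity and monotonicity of $\{b_i\}$ established in the first step are precisely the ingredients needed, but turning them into sharp kernel bounds (rather than merely qualitative ones) and matching the constants so that the iteration sums to $E_\al$ is delicate; by contrast, the algebra reducing the two-level term and fixing $\tau_*$ in the second step is comparatively routine once the kernel estimates are in hand.
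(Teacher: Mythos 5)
The paper does not actually prove this lemma: it is imported verbatim from the cited reference \cite{Li18}, so there is no in-paper argument to compare against. Your sketch is essentially a reconstruction of the proof in that reference --- the convex-combination structure of the L1 weights, the absorption of the two-level term through the first weight gap $d_0-d_1=2-2^{1-\al}$ (which is exactly where $\la=\la_1+\la_2/(2-2^{1-\al})$ and the threshold $\tau_*$ come from), and the inversion by a complementary discrete kernel with $\sum_{j}\theta_{n-j}\le t_n^\al/\Ga(1+\al)$ are all the ingredients used there. The one step I would tighten is the last: rather than iterating the Volterra inequality and resumming the Neumann series (which requires both a separate absorption of the diagonal term $2\la\,\theta_0 v^n$ and the nontrivial iterate bounds $\theta^{*k}\lc t_n^{k\al}/\Ga(1+k\al)$), the cited proof closes an induction on $n$ directly against the ansatz $v^n\le(\cdots)E_\al(2\la\,t_n^\al)$, using a kernel estimate of the form $\sum_{j=1}^{n}\theta_{n-j}\,t_j^{(k-1)\al}/\Ga(1+(k-1)\al)\le t_n^{k\al}/\Ga(1+k\al)$; the overall factor $2$ and the restriction $\tau<\tau_*$ enter precisely to make that induction close.
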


Just as the exponential function plays a fundamental role in the integral order equation, the Mittag-Leffler function naturally appears in the fractional order calculus, and it has many nice properties similar to the exponential function. In fact, we have $E_1(\pm z)=\mathrm e^{\pm z}$ for all $z\in\mathbb C$. Let us mention a couple of properties that we will use later, and refer to the recent monograph \cite{GKMR14} for further details.

For $\al>0$, the Mittag-Leffler function $E_\al(z)$ is an entire function. In particular, for real variable $\xi\in\BR$ and $\al\in(0,1)$, $E_\al(\xi)$ is strictly positive and monotone increasing. Moreover, we have $E_\al(\xi)>0$, $E_\al(0)=1$,
\[
\lim_{\xi\to-\infty}E_\al(\xi)=0,\quad\lim_{\xi\to+\infty}E_\al(\xi)=+\infty,\quad\f\rd{\rd\xi}E_\al(\xi)=\f1\al E_{\al,\al}(\xi)>0,
\]
where $E_{\al,\al}(\xi):=\sum_{k=0}^\infty\f{\xi^k}{\Ga(\al+k\al)}$ is the two parameter Mittag-Leffler function. These properties can be derived from the definition of $E_\al(\xi)$ and the basic properties of gamma function $\Ga(\xi)$ for $\xi>0$. In view of the above properties for Mittag-Leffler functions, we know that the function $E_\al(2\la\,t_n^\al)$ involved in Lemma \ref{lem:Li18} is greater than one.

Now we proceed to the proof the uniform $H^1$-norm estimate for the solution $u_h^m(f_h)$ to the fully discrete scheme \eqref{zou3} or \eqref{bu1}.

\begin{lemma}\label{thm:forward}
The fully discrete scheme \eqref{zou3} or \eqref{bu1} is unconditional stable. Moreover, the following
estimates hold for $m=1,\ldots,M$:
\begin{equation}\label{zou6}
(i)~ \|u_h^m(f_h)\|_{L^2(\Om)}\le C_0\|f_h\|_{L^2(\Om)},\quad(ii)~ \|\nb u_h^m(f_h)\|^2_{L^2(\Om)}\le C_1\|f_h\|^2_{L^2(\Om)},
\end{equation}
where $C_0, C_1>0$ are constants independent of $h$ and $\tau$.
\end{lemma}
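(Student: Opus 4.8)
The plan is to run the energy method on the fully discrete scheme \eqref{bu1}, combining the discrete fractional chain-rule inequality (Lemma \ref{lem:Wang}) with the discrete fractional Gr\"onwall inequality (Lemma \ref{lem:Li18}). First I would settle the unconditional solvability separately from the bounds: at each time level the left-hand side of \eqref{zou3} is the bilinear form $(1+b_0)(v_h,\chi_h)+b_0(\nb v_h,\nb\chi_h)$ on $V_h$, which is coercive with constant $\min\{1,b_0\}$. Hence for \emph{every} $\tau>0$ the step equation is uniquely solvable, so $u_h^m(f_h)$ is well defined with no mesh-dependent restriction on $\tau$; the a priori bounds (i)--(ii) then furnish the asserted unconditional stability.

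For (i) I would test \eqref{bu1} with $\chi_h=u_h^m$, giving
\[
(\bar\pa_\tau^\al u_h^m,u_h^m)+\|\nb u_h^m\|_{L^2(\Om)}^2+\|u_h^m\|_{L^2(\Om)}^2=(f_h\mu^m,u_h^m).
\]
Bounding the first term below by $\f12\bar\pa_\tau^\al\|u_h^m\|_{L^2(\Om)}^2$ via Lemma \ref{lem:Wang} and the right-hand side above by Young's inequality, the spurious $\f12\|u_h^m\|^2$ is absorbed and the gradient term is discarded, leaving $\bar\pa_\tau^\al\|u_h^m\|_{L^2(\Om)}^2\le\|\mu\|_{L^\infty(0,T)}^2\|f_h\|_{L^2(\Om)}^2$. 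With $v^m=\|u_h^m\|_{L^2(\Om)}^2$, $v^0=0$, and $g^m=\|f_h\mu^m\|_{L^2(\Om)}^2$, Lemma \ref{lem:Li18} applies (choose any fixed $\la_1,\la_2>0$, legitimate since $v^m\ge0$, and use the monotonicity of $E_\al$ with $t_m\le T$) and yields (i) with $C_0$ depending only on $T,\al,\mu$.

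The heart of the matter, and the step I expect to be the main obstacle, is the gradient estimate (ii). The key is to test \eqref{bu1} with the admissible choice $\chi_h=\bar\pa_\tau^\al u_h^m\in V_h$, producing
\[
\|\bar\pa_\tau^\al u_h^m\|_{L^2(\Om)}^2+(\nb u_h^m,\nb\bar\pa_\tau^\al u_h^m)+(u_h^m,\bar\pa_\tau^\al u_h^m)=(f_h\mu^m,\bar\pa_\tau^\al u_h^m),
\]
and then to apply Lemma \ref{lem:Wang} not only to $u_h^m$ but also to the vector-valued sequence $\nb u_h^m$. This extension is the delicate point: the inequality of Lemma \ref{lem:Wang} is a purely Hilbert-space statement about the L1 convolution coefficients, so it carries over verbatim to $(L^2(\Om))^d$, giving $(\nb u_h^m,\nb\bar\pa_\tau^\al u_h^m)\ge\f12\bar\pa_\tau^\al\|\nb u_h^m\|_{L^2(\Om)}^2$. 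Estimating the right-hand side by Young's inequality so that the full $\|\bar\pa_\tau^\al u_h^m\|^2$ is absorbed (with the correct sign), I arrive at
\[
\bar\pa_\tau^\al\!\left(\|\nb u_h^m\|_{L^2(\Om)}^2+\|u_h^m\|_{L^2(\Om)}^2\right)\le\|f_h\mu^m\|_{L^2(\Om)}^2.
\]

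Finally, setting $v^m=\|\nb u_h^m\|_{L^2(\Om)}^2+\|u_h^m\|_{L^2(\Om)}^2=\|u_h^m\|_{H^1(\Om)}^2$ with $v^0=0$, a second application of Lemma \ref{lem:Li18} bounds $v^m$, hence $\|\nb u_h^m\|_{L^2(\Om)}^2$, by $C_1\|f_h\|_{L^2(\Om)}^2$ uniformly in $h$ and $\tau$, which is (ii). Here the zeroth-order term in \eqref{ch1} is exactly what makes the $H^1$-coercivity of $v^m$ available; in the pure Neumann case without it, one would instead invoke the Poincar\'e--Friedrichs inequality to recover control of $\|u_h^m\|_{L^2(\Om)}$ from $\|\nb u_h^m\|_{L^2(\Om)}$. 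The main care needed throughout is to justify extending Lemma \ref{lem:Wang} to $\nb u_h^m$ and to verify that each cross term generated by the discrete convolution carries the sign required to be either absorbed or dropped.
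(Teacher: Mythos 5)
Your proposal is correct, and both of your bounds ultimately rest on the same two pillars as the paper's proof (Lemma \ref{lem:Wang} and Lemma \ref{lem:Li18}), but you reach them by a genuinely different and somewhat more economical route. For (i), the paper does \emph{not} use the energy method at all: it tests \eqref{zou3} with $\chi_h=u_h^{m+1}(f_h)$ and runs an induction on $m$, exploiting the monotone decay of the weights $d_j$ and choosing $C_0$ so that $C_0(1-d_m)+Cb_0\le C_0$ uniformly (the argument of \cite[Lemma 4.1]{jin15}); your version --- test with $u_h^m$, invoke \eqref{wang01}, absorb via Young, then apply the discrete Gr\"onwall lemma with artificial $\la_1,\la_2>0$ --- is equally valid and has the advantage of being uniform with part (ii). For (ii), both proofs test with (a multiple of) $\bar\pa_\tau^\al u_h^m$ and both apply Lemma \ref{lem:Wang} to the vector-valued sequence $\nb u_h^m$ without further ado, so your worry about that extension is no more than the paper itself assumes. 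The real divergence is in the treatment of the zeroth-order term: the paper moves $(u_h^m,2\bar\pa_\tau^\al u_h^m)$ to the right, bounds it by $\|u_h^m\|_{L^2(\Om)}^2+\|\bar\pa_\tau^\al u_h^m\|_{L^2(\Om)}^2$, and then controls $\|u_h^m\|_{L^2(\Om)}^2$ through the Poincar\'e--Friedrichs inequality combined with the already-proved bound (i) on the mean value; you instead keep that term on the left with the favorable sign by a second application of Lemma \ref{lem:Wang}, arriving at $\bar\pa_\tau^\al\|u_h^m\|_{H^1(\Om)}^2\le\|f_h\mu^m\|_{L^2(\Om)}^2$, which delivers (i) and (ii) simultaneously and makes part (ii) independent of part (i). Your route is shorter and, since your Gr\"onwall applications carry no genuine $\la_1 v^n$ term, the step-size threshold $\tau_*$ of Lemma \ref{lem:Li18} can be pushed arbitrarily large, whereas the paper's $C_2\|\nb u_h^m\|_{L^2(\Om)}^2$ term fixes $\tau_*$; on the other hand, the paper's Poincar\'e--Friedrichs argument is the one that survives when the zeroth-order term is absent (the pure Neumann case), which is precisely the technique the introduction advertises as the novel ingredient, and its inductive proof of (i) holds unconditionally without any Gr\"onwall machinery. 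Your closing remark about the role of the $+1$ term versus Poincar\'e--Friedrichs shows you see exactly this trade-off.
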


\begin{proof}
We adopt an inductive argument to prove $(i)$ in \eqref{zou6}. First, for $m=1$, it follows immediately from \eqref{zou3} and $u_h^0=0$ that
\[
(1+b_0)\left(u_h^1(f_h),\chi_h\right)+b_0\left(\nb u_h^1(f_h),\nb\chi_h\right)=b_0\left(f_h\,\mu^1,\chi_h\right), \quad\forall\,\chi\in V_h.
\]
Substituting $\chi_h=u_h^1(f_h)$ into the above equation, we obtain
\begin{align*}
b_0\left\|u_h^1(f_h)\right\|_{H^1(\Om)}^2 & \le(1+b_0)\left\|u_h^1(f_h)\right\|_{L^2(\Om)}^2+b_0\left\|\nb u_h^1(f_h)\right\|_{L^2(\Om)}^2\\
& =b_0\left(f_h\,\mu^1,u_h^1(f_h)\right)\le C\,b_0\|f_h\|_{L^2(\Om)}\left\|u_h^1(f_h)\right\|_{H^1(\Om)},
\end{align*}
which implies
\begin{equation}\label{jcc1}
\left\|u_h^1(f_h)\right\|_{H^1(\Om)}\le C_0\|f_h\|_{L^2(\Om)}.
\end{equation}

Next, assuming that $(i)$ in \eqref{zou6} holds for some $m\ge1$, we shall prove that it also holds for $m+1$.
Choosing $\chi_h=u_h^{m+1}(f_h)$ in \eqref{zou3} and employing the monotone decreasing property of the sequence $\{d_j\}$, we obtain
\begin{align*}
& \quad\,\,(1+b_0)\left\|u_h^{m+1}(f_h)\right\|_{L^2(\Om)}^2+b_0\left\|\nb u_h^{m+1}(f_h)\right\|_{L^2(\Om)}^2\\
& \le\left(\sum_{j=0}^{m-1}(d_j-d_{j+1})\left\|u_h^{m-j}(f_h)\right\|_{L^2(\Om)}+C\,b_0\|f_h\|_{L^2(\Om)}\right)\left\|u_h^{m+1}(f_h)\right\|_{L^2(\Om)}\\
& \le(C_0(1-d_m)+C\,b_0)\|f_h\|_{L^2(\Om)}\left\|u_h^{m+1}(f_h)\right\|_{L^2(\Om)}.
\end{align*}
By the same argument as that in the proof of \cite[Lemma 4.1]{jin15}, we can suitably choose $C_0>0$ such that
\[C_0(1-d_m)+C\,b_0\le C_0\]
holds uniformly for all $m=1,2,\ldots$. Then we immediately obtain $\|u_h^{m+1}(f_h)\|_{L^2(\Om)}\le C_0\|f_h\|_{L^2(\Om)}$.

We now prove the assertion $(ii)$ in \eqref{zou6}. Taking $\chi_h=2\bar\pa_\tau^\al u_h^m$ in \eqref{bu1} yields
\begin{align}\label{bu2}
\left(\bar\pa_\tau^\al u_h^m, 2\bar\pa_\tau^\al u_h^m\right)+\left(\nb u_h^m, \nb (2\bar\pa_\tau^\al u_h^m)\right)
+ \left(u_h^m, 2\bar\pa_\tau^\al u_h^m\right)=\left(f_h\,\mu^m, 2\bar\pa_\tau^\al u_h^m\right),
\end{align}
where
\[
\left(\nb u_h^m, \nb (2\bar\pa_\tau^\al u_h^m)\right)= \left(\nb u_h^m, 2 \bar\pa_\tau^\al
\nb u_h^m\right)\ge\bar\pa_\tau^\al \| \nb u_h^m\|_{L^2(\Om)}^2
\]
by \eqref{wang01}. Then \eqref{bu2} leads to
\begin{align*}
2\left\|\bar\pa_\tau^\al u_h^m\right\|_{L^2(\Om)}^2+ \bar\pa_\tau^\al \| \nb u_h^m\|_{L^2(\Om)}^2
\le  2\left\| \bar\pa_\tau^\al u_h^m\right\|_{L^2(\Om)}^2
+ \|u_h^m\|_{L^2(\Om)}^2 +\|f_h\,\mu^m\|_{L^2(\Om)}^2.
\end{align*}
Hence, we have that
\begin{align*}
\bar\pa_\tau^\al\|\nb u_h^m\|_{L^2(\Om)}^2 & \le\|u_h^m\|_{L^2(\Om)}^2+\|f_h\,\mu^m\|_{L^2(\Om)}^2\le C_2\left(\|\nb u_h^m\|_{L^2(\Om)}^2+\left|\int_\Om u_h^m\,\rd x\right|^2\right)+\|f_h\,\mu^m\|_{L^2(\Om)}^2\nonumber\\
& \le C_2\|\nb u_h^m\|_{L^2(\Om)}^2+C\left(\|u_h^m\|_{L^2(\Om)}^2+\|f_h\|_{L^2(\Om)}^2\right)\le C_2\|\nb u_h^m\|_{L^2(\Om)}^2+C_3\|f_h\|_{L^2(\Om)}^2,
\end{align*}
where we  made use of the Poincar\'e-Friedrichs inequality
\[
\|u_h^m\|_{L^2(\Om)}\le C\left(\|\nb u_h^m\|_{L^2(\Om)}+\left|\int_\Om u_h^m\,\rd x\right|\right).
\]
Now applying Lemma \ref{lem:Li18} and \eqref{jcc1}, we obtain that
\begin{align*}
\|\nb u_h^m\|_{L^2(\Om)}^2 & \le2\left(\|\nb u_h^1\|_{L^2(\Om)}^2+\f{t_m^\al}{\Ga(1+\al)}C_3\|f_h\|_{L^2(\Om)}^2\right)E_\al(2C_2t_m^\al)\\
& \le2\left(C_0\|f_h\|_{L^2(\Om)}^2+\f{t_m^\al}{\Ga(1+\al)}C_3\|f_h\|_{L^2(\Om)}^2\right) E_\al(2C_2 t_m^\al)\\
& \le2\left(C_0+\f{T^\al}{\Ga(1+\al)}C_3\right)E_\al(2C_2T^\al)\|f_h\|_{L^2(\Om)}^2\le C_1\|f_h\|_{L^2(\Om)}^2.
\end{align*}
This completes the proof.
\end{proof}

The next theorem provides the existence of a solution to the discrete system \eqref{zou1}.

\begin{theorem}\label{thm:hexist}
For each fixed $\tau>0$ and $h>0$, there exists at least a minimizer to the discrete system \eqref{zou1}.
\end{theorem}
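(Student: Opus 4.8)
The plan is to exploit the finite dimensionality of $S_h$, which reduces \eqref{zou1} to minimizing a continuous, coercive functional over a Euclidean space and thereby bypasses the delicate weak-convergence machinery of Theorem \ref{thm:exist}. First I would record the structural facts that drive everything: $S_h$ is a finite-dimensional subspace of $L^2(\Om)$, so that bounded sequences admit strongly convergent subsequences (Bolzano--Weierstrass) and weak and strong convergence coincide; and the discrete forward map $f_h\mapsto(u_h^m(f_h))_{m=0}^M$ defined by \eqref{zou3} is \emph{linear} in $f_h$, with the uniform bound of Lemma \ref{thm:forward} at our disposal.

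Next I would take a minimizing sequence $\{f_h^n\}\subset S_h$ satisfying $\lim_{n\to\infty}J_{h,\tau}(f_h^n)=\inf_{f_h\in S_h}J_{h,\tau}(f_h)$, the infimum being finite since $J_{h,\tau}\ge0$. Because $J_{h,\tau}(f_h^n)\ge\be\|f_h^n\|_{L^2(\Om)}^2$, the sequence is bounded in $S_h$, and hence a subsequence (still denoted $\{f_h^n\}$) converges strongly to some $f_h^*\in S_h$.

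The key step is the passage to the limit in the data-misfit term, which I would handle through linearity combined with Lemma \ref{thm:forward}. Since \eqref{zou3} is linear in the source, $u_h^m(f_h^n)-u_h^m(f_h^*)=u_h^m(f_h^n-f_h^*)$, and part $(i)$ of \eqref{zou6} yields $\|u_h^m(f_h^n)-u_h^m(f_h^*)\|_{L^2(\Om)}\le C_0\|f_h^n-f_h^*\|_{L^2(\Om)}\to0$ for each $m=1,\ldots,M$ (the term $m=0$ being independent of $f_h$ owing to $u_h^0=0$). Consequently every summand $\|u_h^m(f_h^n)-u^\de(\,\cdot\,,t_m)\|_{L^2(\om)}^2$ converges to its counterpart at $f_h^*$, while strong convergence gives $\|f_h^n\|_{L^2(\Om)}^2\to\|f_h^*\|_{L^2(\Om)}^2$. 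As the sum in \eqref{zou1} is finite, I conclude $J_{h,\tau}(f_h^n)\to J_{h,\tau}(f_h^*)$, so that $J_{h,\tau}(f_h^*)=\inf_{f_h\in S_h}J_{h,\tau}(f_h)$ and $f_h^*$ is a minimizer.

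I do not anticipate a genuine obstacle: the one point demanding care is the continuity of the discrete solution operator, which is supplied directly by the linearity of \eqref{zou3} and the $h,\tau$-uniform estimate of Lemma \ref{thm:forward}. In contrast to Theorem \ref{thm:exist}, neither a separate verification of the initial condition nor a limiting argument in the variational identity is required, since in the finite-dimensional setting strong convergence transfers through the linear scheme automatically. The statement asks only for existence, so I would stop here; I note in passing that strict convexity of $J_{h,\tau}$, inherited from the regularization term $\be\|f_h\|_{L^2(\Om)}^2$, would additionally furnish uniqueness exactly as in Theorem \ref{thm:exist}.
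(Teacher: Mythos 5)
Your proof is correct, and it reaches the conclusion by a cleaner mechanism than the paper at the one step that matters. Both arguments follow the direct method: a minimizing sequence is bounded in $S_h$ via the regularization term, and finite dimensionality upgrades compactness to strong convergence (the paper first extracts a weakly convergent subsequence and then invokes norm equivalence to make the convergence strong; you go straight to Bolzano--Weierstrass, which amounts to the same thing). Where you genuinely diverge is in identifying the limit of the discrete states: the paper passes to the limit $n\to\infty$ in the discrete variational identity \eqref{zou3} satisfied by $u_h^m(f_h^n)$ and recognizes the limiting system as the one defining $u_h^m(f_h^*)$, whereas you observe that the scheme \eqref{zou3} is linear in the source, so $u_h^m(f_h^n)-u_h^m(f_h^*)=u_h^m(f_h^n-f_h^*)$, and estimate $(i)$ of Lemma \ref{thm:forward} turns the stability bound into Lipschitz continuity of the discrete solution operator $S_h\to V_h$. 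This is a legitimate use of the lemma since $f_h^n-f_h^*\in S_h$, and it buys you full continuity of $J_{h,\tau}$ (hence $J_{h,\tau}(f_h^n)\to J_{h,\tau}(f_h^*)$) rather than mere lower semicontinuity, making the final inequality an equality; the paper's route, by contrast, mirrors the structure of Theorem \ref{thm:exist} and so generalizes more directly to settings where the forward map is not linear. Incidentally, the paper itself records the linearity you exploit, but only later, in Section \ref{sec-numer}, when computing $J_{h,\tau}'$. Your closing remark that strict convexity would give uniqueness is also consistent with the statement, which claims only existence.
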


\begin{proof}
It is readily seen from the non-negativity of $J_{h,\tau}(f_h)$ that $\inf J_{h,\tau}(f_h)$ is finite. Then there exists a minimizing sequence $\{f_h^n\}\subset S_h$ such that
\[\lim_{n\to\infty}J_{h,\tau}(f_h^n)=\inf_{f_h\in S_h}J_{h,\tau}(f_h).\]
Then $\{f_h^n\}$ is uniformly bounded in $S_h\subset L^2(\Om)$. Therefore, there exist $f_h^*\in S_h$ and a subsequence of $\{f_h^n\}$, still denoted by $\{f_h^n\}$, such that
\[f_h^n\rightharpoonup f_h^*\quad\mbox{in }L^2(\Om)\mbox{ as }n\to\infty.\]
Similarly to the proof of Theorem \ref{thm:exist}, we shall prove that $f_h^*$ is a minimizer of \eqref{zou1}.

Since $\tau>0$ is now a fixed constant, if follows from Lemma \ref{thm:forward} that for each $m=1,\ldots,M$, the sequence $\{u^m_h(f_h^n)\}$ is uniformly bounded in $V_h\subset L^2(\Om)$ with respect to $n$. Again, for each $m=1,\ldots,M$, this indicates the existence of $u^m_{h,*}\in V_h$ and a subsequence of $\{u^m_h(f_h^n)\}$, still denoted by $\{u_h^m(f_h^n)\}$, such that
\[
u_h^m(f_h^n)\rightharpoonup u^m_{h,*}\quad\mbox{in }L^2(\Om)\mbox{ as }n\to\infty,\ m=1,\ldots,M.
\]
In view of the norm equivalence in finite-dimensional spaces, the above two weak convergence results are actually strong, i.e.,
\begin{equation}\label{jc2}
\begin{aligned}
& u_h^m(f_h^n)\to u^m_{h,*}\mbox{ in }H^1(\Om),\ m=1,\ldots,M,\\
& f_h^n\to f_h^*\mbox{ in }H^1(\Om)
\end{aligned}
\quad\mbox{as }n\to\infty.
\end{equation}
Now it suffices to show $u^m_{h,*}=u_h^m(f_h^*)$. In fact, by definition \eqref{zou3} and up to taking a further subsequence, we see that $\{u_h^m(f_h^n)\}_{m=1}^M$ satisfies
\[
(1+b_0)\left(u_h^{m+1}(f_h^n),\chi_h\right)+b_0\left(\nb u_h^{m+1}(f_h^n),\nb\chi_h\right)=\sum_{j=0}^{m-1}(d_j-d_{j+1})\left(u_h^{m-j}(f_h^n),\chi_h\right)+b_0\left(f_h^n\,\mu_h^{m+1},\chi_h\right)
\]
for all $\chi_h\in V_h$. Passing $n\to\infty$ in the above equation and using \eqref{jc2}, we obtain
\[
(1+b_0)\left(u^{m+1}_{h,*},\chi_h\right)+b_0\left(\nb u^{m+1}_{h,*},\nb\chi_h\right)=\sum_{j=0}^{m-1}(d_j-d_{j+1})\left(u^{m-j}_{h,*},\chi_h\right)+b_0\left(f_h^*\,\mu_h^{m+1},\chi_h\right)
\]
for all $\chi_h\in V_h$. By definition \eqref{zou3} again, we conclude $u^m_{h,*}=u_h^m(f_h^*)$ by setting $u_{h,*}^0=0$ artificially.

Finally, collecting the above results, we again employ the lower semi-continuity to deduce
\begin{align*}
J_{h,\tau}(f_h^*) & =\tau\sum_{m=0}^Mc_m\left\|u_h^m(f_h^*)-u^\de(\,\cdot\,,t_m)\right\|_{L^2(\om)}^2+\be\|f_h^*\|_{L^2(\Om)}^2\\
& =\lim_{n\to\infty}\tau\sum_{m=0}^Mc_m\left\|u_h^m(f_h^n)-u^\de(\,\cdot\,,t_m)\right\|_{L^2(\om)}^2+\be\liminf_{n\to\infty}\|f_h^n\|_{L^2(\Om)}^2\\
& \le\liminf_{n\to\infty}J_{h,\tau}(f_h^n)=\inf_{f_h\in S_h}J_{h,\tau}(f_h).
\end{align*}
Consequently, $f_h^*$ is indeed a minimizer of $J_{h,\tau}(f_h)$ over $S_h$.
\end{proof}

\section{Convergence of the Fully Discrete Approximation}\label{sec-cov}

This section is devoted to the convergence analysis of the fully discrete finite element approximation \eqref{zou1}. In order to relate \eqref{zou1} with the continuous minimization problem \eqref{c2}, we start with the introduction of a few auxiliary tools and useful results.

First, we recall the standard $L^2$ projection $P_h$: $L^2(\Om)\to V_h$ and Ritz projection $R_h$: $H^1(\Om)\to V_h$, defined respectively by
\begin{alignat*}2
(P_h\vp,\chi_h) & =(\vp,\chi_h), & \quad & \forall\,\chi_h\in V_h,\\
(\nb R_h\vp,\chi_h) & =(\nb\vp,\chi_h), & \quad & \forall\,\chi_h\in V_h.
\end{alignat*}
The operators  $P_h$ and $R_h$ satisfy the following approximation properties (see \cite{cia78,tho06}):
\begin{alignat}2
& \|(I-P_h)\vp\|_{H^\ga(\Om)}\le C\,h^{1-\ga}\|\vp\|_{H^1(\Om)}, & \quad & \forall\,\vp\in H^1(\Om),\ \ga\in[0,1],\label{jc5}\\
& \|(I-R_h)\vp\|_{L^2(\Om)}+h\|\nb(\vp-R_h\vp)\|_{L^2(\Om)}\le C\,h^\ga\|\vp\|_{H^\ga(\Om)}, & \quad & \forall\,\vp\in H^\ga(\Om),\ \ga\in[1,2].\label{jc3}
\end{alignat}

The next lemma is a useful classical approximation result (see \cite{xiezou05,zei85}).

\begin{lemma}\label{lem:classic}
For a Banach space $X$ and a function $g\in C([0,T];X)$, define a step function approximation by
\[S_\Delta g(x,t)=\sum_{m=1}^M\mathbf1_{(t_{m-1},t_m]}(t)\,g(x,t_m),\]
where $\mathbf1_{(t_{m-1},t_m]}$ is the characteristic function of $(t_{m-1},t_m]$. Then we have the convergence
\[\lim_{\tau\to0}\int_0^T\|S_\Delta g(\,\cdot\,,t)-g(\,\cdot\,,t)\|^2_X\,\rd t=0.\]
\end{lemma}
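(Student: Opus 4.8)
The plan is to exploit the compactness of the time interval $[0,T]$ to upgrade the continuity of $g$ into \emph{uniform} continuity, and then to control the integrand pointwise on each subinterval. Since $g\in C([0,T];X)$ and $[0,T]$ is compact, the map $t\mapsto g(\,\cdot\,,t)$ is uniformly continuous from $[0,T]$ into $X$. Hence, given any $\varepsilon>0$, there exists $\de>0$, independent of $t$ and $s$, such that
\[
|t-s|<\de\ \Longrightarrow\ \|g(\,\cdot\,,t)-g(\,\cdot\,,s)\|_X<\varepsilon,\quad\forall\,t,s\in[0,T].
\]

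Next I would estimate the integrand on each subinterval generated by the partition. For $t\in(t_{m-1},t_m]$ we have by definition $S_\Delta g(\,\cdot\,,t)=g(\,\cdot\,,t_m)$ and $|t-t_m|\le\tau$. Therefore, as soon as $\tau<\de$, the uniform continuity estimate yields
\[
\|S_\Delta g(\,\cdot\,,t)-g(\,\cdot\,,t)\|_X=\|g(\,\cdot\,,t_m)-g(\,\cdot\,,t)\|_X<\varepsilon
\]
for every $t\in(t_{m-1},t_m]$ and every $m=1,\ldots,M$. Since the finitely many nodal points $\{t_{m-1}\}$ form a set of Lebesgue measure zero, this bound holds for a.e.\! $t\in(0,T)$, with a single threshold $\de$ valid simultaneously across all subintervals.

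Integrating the squared pointwise bound over $[0,T]$ then gives
\[
\int_0^T\|S_\Delta g(\,\cdot\,,t)-g(\,\cdot\,,t)\|_X^2\,\rd t\le\varepsilon^2\,T
\]
whenever $\tau<\de$. As $\varepsilon>0$ is arbitrary, letting $\tau\to0$ (equivalently $M\to\infty$) completes the proof. For rigor I would also remark that the integral is well defined: the continuity of $t\mapsto g(\,\cdot\,,t)$ makes $t\mapsto\|g(\,\cdot\,,t)\|_X$ bounded and measurable on $[0,T]$, while $S_\Delta g$ is a simple function in time, so the difference is strongly measurable with square-integrable norm.

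There is no genuine obstacle in this argument; the only point requiring care is that the modulus of continuity must be \emph{uniform} over the whole interval rather than merely pointwise in $t$, which is precisely what the compactness of $[0,T]$ supplies. This uniformity is exactly what allows the single $\de$ to dominate all the subintervals at once, independently of $m$ and hence of the refinement level $\tau$.
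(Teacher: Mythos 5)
Your proof is correct and complete. Note, however, that the paper does not actually prove this lemma: it is stated as ``a useful classical approximation result'' with citations to Xie--Zou and Zeidler, and no argument is supplied. Your uniform-continuity proof is the standard one and fills that gap in an entirely self-contained, elementary way: compactness of $[0,T]$ upgrades continuity of $t\mapsto g(\,\cdot\,,t)$ to uniform continuity, a single modulus $\de$ then controls every subinterval simultaneously once $\tau<\de$, and integrating the pointwise bound gives $\varepsilon^2T$. Two trivial remarks: the intervals $(t_{m-1},t_m]$, $m=1,\ldots,M$, cover all of $(0,T]$, so your pointwise estimate in fact holds for every $t\in(0,T]$ and the ``a.e.'' caveat is needed only to exclude the single point $t=0$; and your closing observation about strong measurability of $S_\Delta g-g$ is exactly the right thing to say to justify that the Bochner-type integral makes sense. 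Nothing is missing.
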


In order to demonstrate the convergence of the finite element approximation \eqref{zou1} to the continuous minimization problem \eqref{c2}, we shall turn to the following key error estimate, which is a straightforward consequence of \cite[Theorem 3.6]{jin16}.

\begin{lemma}\label{lem:initial}
Let $u(f)$ be the solution to system \eqref{ch1}, where $f\in L^2(\Om)$ and $\mu\in C^1[0,T]$. Then for the solution $\{u_h^m(P_hf)\}$ to
the fully discrete scheme \eqref{zou3} with $f_h=P_hf$, we have the following error estimate: for any $m=1,\ldots,M$, there holds
\begin{equation}\label{dai7}
\|u(f)(\,\cdot\,,t_m)-u_h^m(P_hf)\|_{L^2(\Om)}\le C\|f\|_{L^2(\Om)}\|\mu\|_{C^1[0,T]}\left(h^2|\ln h|^2+t_m^{\al-1}\tau\right).
\end{equation}
\end{lemma}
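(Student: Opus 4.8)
The plan is to derive \eqref{dai7} directly from \cite[Theorem 3.6]{jin16} by recognizing the fully discrete scheme \eqref{zou3} (equivalently \eqref{bu1}) as the standard L1--Galerkin approximation of \eqref{ch1} with zero initial data and the specific separated source $F(x,t):=f(x)\mu(t)$, and then checking that the hypotheses of that theorem are met with the relevant data norms controlled by $\|f\|_{L^2(\Om)}\|\mu\|_{C^1[0,T]}$. The cited theorem furnishes precisely a combined space--time $L^2(\Om)$ error bound of the form $C(h^2|\ln h|^2+t_m^{\al-1}\tau)$ for the inhomogeneous subdiffusion problem with a nonsmooth source, so once the matching is carried out the conclusion follows. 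First I would make the two structural identifications explicit.

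The first point to settle is the zeroth-order reaction term, since the operator in \eqref{ch1} is $-\triangle+1$ rather than $-\triangle$. Under the homogeneous Neumann boundary condition, $A:=-\triangle+1$ is self-adjoint and strictly positive definite on $L^2(\Om)$, with eigenvalues bounded below by $1$; hence it is sectorial and admits the same resolvent and solution-operator (Mittag--Leffler) representations underlying the analysis of \cite{jin16}. In fact the shift by $1$ is exactly what makes the Neumann problem coercive, which is the property those resolvent estimates rely on, so both the spatial finite element analysis (the $h^2|\ln h|^2$ contribution, coming via the nonsmooth-data smoothing that places $u(\,\cdot\,,t)\in H^2(\Om)$ for $t>0$ together with the Ritz bound \eqref{jc3}) and the temporal L1 analysis (the $t_m^{\al-1}\tau$ contribution) transfer verbatim to $A$. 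Equivalently, one may regard $+u$ as a bounded lower-order perturbation.

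The second and principal point is the regularity of the source. The singular temporal weight $t_m^{\al-1}$ in \eqref{dai7} is the signature of the nonsmooth-source regime, for which \cite{jin16} requires control of quantities such as $\|F(\,\cdot\,,0)\|_{L^2(\Om)}$ together with $\int_0^{t_m}\|\pa_sF(\,\cdot\,,s)\|_{L^2(\Om)}\,\rd s$, i.e.\ $F\in C^1$ in time with values in $L^2(\Om)$, which is weaker than the smoothness needed for the unweighted rate $\tau$. The separated form makes every such quantity factor as a product: since $\|F(\,\cdot\,,t)\|_{L^2(\Om)}=|\mu(t)|\,\|f\|_{L^2(\Om)}$ and $\|\pa_tF(\,\cdot\,,t)\|_{L^2(\Om)}=|\mu'(t)|\,\|f\|_{L^2(\Om)}$, one gets $\|F(\,\cdot\,,0)\|_{L^2(\Om)}\le\|\mu\|_{C^1[0,T]}\|f\|_{L^2(\Om)}$ and $\int_0^{t_m}\|\pa_sF(\,\cdot\,,s)\|_{L^2(\Om)}\,\rd s\le T\|\mu\|_{C^1[0,T]}\|f\|_{L^2(\Om)}$. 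Substituting these into the cited estimate yields exactly the stated right-hand side $C\|f\|_{L^2(\Om)}\|\mu\|_{C^1[0,T]}(h^2|\ln h|^2+t_m^{\al-1}\tau)$.

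The delicate part is therefore not a new computation but the bookkeeping of regularity: one must confirm that only $f\in L^2(\Om)$ and $\mu\in C^1[0,T]$ are genuinely required for \cite[Theorem 3.6]{jin16} to apply at the claimed rate, that the spatial $h^2|\ln h|^2$ contribution is uniform in $m$ (it is, being governed by $\|f\|_{L^2(\Om)}$ alone, with $f_h=P_hf$ the natural discrete source as justified by \eqref{jc5}), and that the time-singular factor $t_m^{\al-1}$ attaches solely to the temporal error term. Because \eqref{dai7} is stated as a straightforward consequence, I expect the entire proof to reduce to these verifications followed by a one-line invocation of the cited theorem, with no further estimation needed.
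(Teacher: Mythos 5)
Your proposal matches the paper's treatment: the paper gives no standalone proof of Lemma \ref{lem:initial}, declaring it ``a straightforward consequence of'' \cite[Theorem 3.6]{jin16}, and its Remark \ref{re1} performs exactly the bookkeeping you describe --- verifying that $f\in L^2(\Om)$ and $\mu\in C^1[0,T]$ guarantee the weighted source condition $\int_0^t(t-s)^{\al-1}\|f\,\mu'(s)\|_{L^2(\Om)}\,\rd s<\infty$ and that the resulting $O(\tau)$ contribution is absorbed into the $t_m^{\al-1}\tau$ term. Your additional remarks on the coercive shift $-\triangle+1$ under Neumann conditions and on the factorization of the source norms are correct and only make explicit what the paper leaves implicit.
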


\begin{remark}\label{re1}
According to \cite[Theorem 3.6]{jin16}, the source term $f\,\mu$ should satisfy
\begin{equation}\label{jin16}
\int_0^t(t-s)^{\al-1}\|f\,\mu'(s)\|_{L^2(\Om)}\,\rd s<\infty
\end{equation}
for $0<t\le T$. This is automatically guaranteed by the assumption $f\in L^2(\Om)$ and $\mu\in C^1[0,T]$. Therefore, the term including \eqref{jin16} with $t=t_m$ in \cite[Theorem 3.6]{jin16} is of order $O(\tau)$, which is absorbed in the last term in \eqref{dai7}.
On the other hand, since $t_m^{\al-1}\tau\le\tau^\al\to0$ as $\tau\to0$, \eqref{dai7} immediately implies
\[\lim_{h,\tau\to0}\|u(f)(\,\cdot\,,t_m)-u_h^m(P_hf)\|_{L^2(\Om)}=0,\quad m=1,\ldots,M.\]
\end{remark}

The following lemma provides a crucial strong convergence.

\begin{lemma}\label{lem:integral1}
Let $\{f_h\}_{h>0}$ be a sequence in $S_h$ which converges weakly to some $f\in L^2(\Om)$ as $h\to0$. Let $\{u_h^m(f_h)\}$ and $u(f)$ be the solutions of \eqref{zou3} and \eqref{ch1}, respectively. Then the following convergence holds:
\begin{equation}\label{hk30}
\lim_{h,\tau\to0}\tau\sum_{m=0}^Mc_m\left\|u_h^m(f_h)-u^\de(\,\cdot\,,t_m)\right\|_{L^2(\om)}^2=\left\|u(f)-u^\de\right\|_{L^2(\om\times(0,T))}^2.
\end{equation}
\end{lemma}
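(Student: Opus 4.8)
The plan is to decompose the discrete sum by comparing $u_h^m(f_h)$ against the continuous solution $u(f)$ evaluated at the nodal points, so that the statement reduces to a quadrature-convergence claim plus an error term that vanishes. First I would introduce the step-function notation of Lemma \ref{lem:classic} and rewrite the left-hand side of \eqref{hk30} as a time integral of a piecewise-constant integrand. Writing $w_h^m:=u_h^m(f_h)-u^\de(\,\cdot\,,t_m)$, I want to show that $\tau\sum_{m=0}^M c_m\|w_h^m\|_{L^2(\om)}^2$ converges to $\|u(f)-u^\de\|_{L^2(\om\times(0,T))}^2$. The natural splitting is
\[
u_h^m(f_h)-u^\de(\,\cdot\,,t_m)=\bigl(u_h^m(f_h)-u_h^m(P_hf)\bigr)+\bigl(u_h^m(P_hf)-u(f)(\,\cdot\,,t_m)\bigr)+\bigl(u(f)(\,\cdot\,,t_m)-u^\de(\,\cdot\,,t_m)\bigr).
\]
The third piece is handled by the quadrature: since $u(f)\in C([0,T];L^2(\Om))$ by Lemma \ref{lem:y1} and $u^\de\in C([0,T];L^2(\om))$ by the standing assumption, the function $t\mapsto\|u(f)(\,\cdot\,,t)-u^\de(\,\cdot\,,t)\|_{L^2(\om)}^2$ is continuous on $[0,T]$, and the composite trapezoidal rule with weights $c_m$ converges to its integral, which is exactly the right-hand side of \eqref{hk30}. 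The second piece is controlled directly by the error estimate \eqref{dai7} of Lemma \ref{lem:initial}, which gives $\|u_h^m(P_hf)-u(f)(\,\cdot\,,t_m)\|_{L^2(\Om)}\le C\|f\|_{L^2(\Om)}(h^2|\ln h|^2+t_m^{\al-1}\tau)$; summing against $\tau c_m$ and using $t_m^{\al-1}\tau\le\tau^\al$ shows this contribution tends to $0$ as $h,\tau\to0$.

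The crux is the first piece, $u_h^m(f_h)-u_h^m(P_hf)$, because here the two discrete solutions are driven by different data, $f_h$ versus $P_hf$, and I only know $f_h\rightharpoonup f$ weakly (not strongly) in $L^2(\Om)$. By linearity of the scheme \eqref{zou3}, the difference $e_h^m:=u_h^m(f_h)-u_h^m(P_hf)$ solves the same discrete system with source $(f_h-P_hf)\mu^m$ and zero initial data, so Lemma \ref{thm:forward}(i) gives the clean a priori bound $\|e_h^m\|_{L^2(\Om)}\le C_0\|f_h-P_hf\|_{L^2(\Om)}$ uniformly in $m$, $h$, $\tau$. The difficulty is that $\|f_h-P_hf\|_{L^2(\Om)}$ need \emph{not} tend to zero under mere weak convergence. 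The way around this is to restrict attention to the observation subdomain $\om$ and exploit compactness: I would argue that the uniform $H^1$-bound of Lemma \ref{thm:forward}(ii) forces each $u_h^m(f_h)$ to be uniformly bounded in $H^1(\Om)$, so along a subsequence it converges \emph{strongly} in $L^2(\om)$ by the compact embedding $H^1(\Om)\subset\subset L^2(\om)$; combining this with the identification of the weak limit (the same variational-limit argument used in the proof of Theorem \ref{thm:exist}, showing the limit equals $u(f)$ at the nodal level) pins down the limit independently of the subsequence and upgrades the relevant convergence in the $L^2(\om)$-norm.

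Concretely, I expect the cleanest route is to show that $u_h^m(f_h)-u_h^m(P_hf)\to0$ in $L^2(\om)$ in an averaged (time-integrated) sense: using the a priori $H^1$-estimate to extract strong $L^2(\om)$-limits of both $u_h^m(f_h)$ and $u_h^m(P_hf)$, identifying both limits with $u(f)(\,\cdot\,,t_m)$ via the weak-form passage to the limit (noting $P_hf\to f$ and $f_h\rightharpoonup f$ both yield the \emph{same} continuous limit through \eqref{var12}-type identities), and hence concluding that the first piece contributes $0$ to \eqref{hk30}. The main obstacle, to be handled carefully, is precisely this point: transferring weak convergence of the data $f_h\rightharpoonup f$ into norm-convergence of the solution differences on $\om$, for which the compact embedding $H^1(\Om)\hookrightarrow\hookrightarrow L^2(\om)$ together with the uniform $H^1$-bound \eqref{zou6}(ii) is the essential tool; without the $H^1$-estimate of Lemma \ref{thm:forward} (the paper's novel ingredient) one could not close this gap. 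Finally, a standard ``every subsequence has a further subsequence converging to the same limit'' argument removes the subsequence restrictions and yields convergence of the full family as $h,\tau\to0$.
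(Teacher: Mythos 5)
Your proposal is correct and follows essentially the same route as the paper: the same three-way splitting into a quadrature error (handled by Lemma \ref{lem:classic}), a projection/discretization error (handled by the estimate \eqref{dai7}), and the difference $u_h^m(f_h)-u_h^m(P_hf)$, with the crux resolved exactly as in the paper by combining the uniform $H^1$-bound of Lemma \ref{thm:forward} with the compact embedding of $H^1(\Om)$ into $L^2$ to upgrade the weak convergence $f_h-P_hf\rightharpoonup0$ to strong $L^2$-convergence of the solution difference. The only cosmetic difference is in identifying the limit of that difference: the paper passes $h\to0$ in the discrete variational equation \eqref{jc6} and recognizes the time-semidiscrete scheme with zero source (hence the limit is $0$), which is slightly cleaner than identifying each discrete solution's limit with $u(f)(\,\cdot\,,t_m)$ separately, but the substance is the same.
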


\begin{proof}
For simplicity, in this proof we abbreviate $u^m:=u(f)(\,\cdot\,,t_m)$.

Since we have $u(f)\in C([0,T];L^2(\Om))$ by Lemma \ref{lem:y1}, first we can directly apply Lemma \ref{lem:classic} to obtain
\[
\lim_{\tau\to0}\tau\sum_{m=0}^Mc_m\left\|u^m-u^\de(\,\cdot\,,t_m)\right\|_{L^2(\om)}^2=\left\|u(f)-u^\de\right\|_{L^2(\om\times(0,T))}^2.
\]
Using this convergence, \eqref{hk30} follows immediately if we can demonstrate
\begin{equation}\label{hk33}
\lim_{h,\tau\to0}\tau\sum_{m=0}^Mc_m\left\|u_h^m(f_h)-u^\de(\,\cdot\,,t_m)\right\|_{L^2(\om)}^2=\lim_{\tau\to0}\tau\sum_{m=0}^Mc_m\left\|u^m-u^\de(\,\cdot\,,t_m)\right\|_{L^2(\om)}^2.
\end{equation}

In order to show \eqref{hk33}, we utilize the a priori estimates of $u_h^m(f_h)$ and $u^m$ to deduce
\begin{align*}
& \quad\,\left|\tau\sum_{m=0}^Mc_m\left\|u_h^m(f_h)-u^\de(\,\cdot\,,t_m)\right\|_{L^2(\om)}^2-\tau\sum_{m=0}^Mc_m\left\|u^m-u^\de(\,\cdot\,,t_m)\right\|_{L^2(\om)}^2\right|\\
& \le\left(\tau\sum_{m=0}^M\|u_h^m(f_h)-u^m\|_{L^2(\om)}^2\right)^\f12\left(\tau\sum_{m=0}^M\|u_h^m(f_h)+u^m-2\,u^\de(\,\cdot\,,t_m)\|_{L^2(\om)}^2\right)^\f12\\
& \le C\left(\tau\sum_{m=0}^M\|u_h^m(f_h)-u^m\|_{L^2(\Om)}^2\right)^\f12.
\end{align*}
Clearly, the convergence in \eqref{hk33} follows if there holds
\begin{equation}\label{des1}
\lim_{h,\tau\to0}\tau\sum_{m=0}^M\|u_h^m(f_h)-u^m\|_{L^2(\Om)}^2=0.
\end{equation}
To this end, we split the error $u_h^m(f_h)-u^m$ into two parts:
\[u_h^m(f_h)-u^m=(u_h^m(f_h)-u_h^m(P_hf))+(u_h^m(P_hf)-u^m)=:\eta_h^m+\theta_h^m.\]
From the estimate \eqref{dai7} and Remark \ref{re1}, we can easily see that $\|\theta_h^m\|_{L^2(\Om)}\to0$ as $h,\tau\to0$.

Now we concentrate on the estimate of $\eta_h^m$. By the definitions of $u_h^m(f_h)$ and $u_h^m(P_hf)$, it is readily seen that $\eta_h^m$ satisfies
\begin{equation}\label{jc6}
(1+b_0)\left(\eta_h^{m+1},\chi_h\right)+b_0\left(\nb\eta_h^{m+1},\nb\chi_h\right)=\sum_{j=0}^{m-1}(d_j-d_{j+1})\left(\eta_h^{m-j},\chi_h\right)+b_0\left((f_h-P_hf)\mu_h^{m+1},\chi_h\right)
\end{equation}
for all $\chi_h\in V_h$. By the assumption of Lemma \ref{lem:integral1} and \eqref{jc5}, we have
\[
f_h\rightharpoonup f,\quad P_hf\to f\quad\mbox{in }L^2(\Om)\mbox{ as }h\to0,
\]
which indicates $f_h-P_hf\rightharpoonup0$ in $L^2(\Om)$ as $h\to0$. Meanwhile, by Lemma \ref{thm:forward} and $f_h\,,P_hf\in S_h$, we estimate
\[
\|\eta_h^m\|_{H^1(\Om)}\le C\|f_h-P_hf\|_{L^2(\Om)}\le C,\quad m=1,\ldots,M.
\]
Then for each $m=1,\ldots,M$, there exist some element $\eta_*^m\in H^1(\Om)$ and a subsequence of $\{\eta_h^m\}$, still denoted by $\{\eta_h^m\}$, such that
\begin{equation}\label{eq-cov-eta}
\eta_h^m\rightharpoonup \eta_*^m\mbox{ in }H^1(\Om),\quad\eta_h^m\to\eta_*^m\mbox{ in }L^2(\Om)\quad\mbox{as }h\to0,\ m=1,\ldots,M.
\end{equation}
Picking any $\chi\in H^1(\Om)$, we take $\chi_h=R_h\chi\in V_h$ as the test function in \eqref{jc6}. Then it follows from \eqref{jc3} that $\chi_h\to\chi$ in $H^1(\Om)$ as $h\to0$. Together with \eqref{eq-cov-eta}, we derive
\[
\left(\nb\eta_h^{m+1},\nb\chi_h\right)=\left(\nb\eta_h^{m+1},\nb\chi\right)+\left(\nb\eta_h^{m+1},\nb(\chi_h-\chi)\right)\to\left(\nb\eta_*^{m+1},\nb\chi\right)\quad\mbox{as }h\to0.
\]
Using \eqref{eq-cov-eta} again, we pass $h\to0$ in equation \eqref{jc6} to deduce
\[
(1+b_0)\left(\eta_*^{m+1},\chi\right)+b_0\left(\nb\eta_*^{m+1},\nb\chi\right)=\sum_{j=0}^{m-1}(d_j-d_{j+1})\left(\eta_*^{m-j},\chi\right),
\]
which turns out to be the semidiscrete scheme in time for the original problem \eqref{ch1} with $f=0$. Then it follows immediately from \cite{lin07} that $\|\eta_*^m\|_{L^2(\Om)}=0$ or equivalently $\eta_*^m=0$.

Consequently, we obtain $\|\eta_h^m\|_{L^2(\Om)}\to0$ as $h\to0$, and finally conclude
\[
\|u_h^m(f_h)-u^m\|_{L^2(\Om)}\le\|\theta_h^m\|_{L^2(\Om)}+\|\eta_h^m\|_{L^2(\Om)}\to0
\]
as $h,\tau\to0$. This completes the proof of \eqref{des1}.
\end{proof}

We conclude this section with the convergence of the fully discrete finite element approximation \eqref{zou1} to the continuous minimization problem \eqref{c2}.

\begin{theorem}\label{thm:convergence}
Let $\{f_h^*\}_{h>0}$ be the minimizers to the discrete minimization problem \eqref{zou1}. Then there exists a subsequence of $\{f_h^*\}_{h>0}$ which converges weakly in $L^2(\Om)$ to the minimizer of the continuous problem \eqref{c2} as $h,\tau\to0$.
\end{theorem}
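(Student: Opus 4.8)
The plan is to apply the direct method of the calculus of variations, with Lemma \ref{lem:integral1} serving as the device that lets the discrete data-fidelity term pass to its continuous counterpart under mere weak convergence, and with the uniqueness in Theorem \ref{thm:exist} pinning down the limit.

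First I would secure uniform boundedness of $\{f_h^*\}$ in $L^2(\Om)$. Writing $f^*$ for the unique minimizer of \eqref{c2} and using $P_h f^*\in V_h\subset S_h$ as a competitor in \eqref{zou1}, minimality gives $J_{h,\tau}(f_h^*)\le J_{h,\tau}(P_h f^*)$. Since $P_h f^*\to f^*$ strongly in $L^2(\Om)$ by \eqref{jc5}, Lemma \ref{lem:integral1} (with $f_h=P_h f^*$) sends the data term of $J_{h,\tau}(P_h f^*)$ to $\|u(f^*)-u^\de\|_{L^2(\om\times(0,T))}^2$ while $\be\|P_h f^*\|_{L^2(\Om)}^2\to\be\|f^*\|_{L^2(\Om)}^2$; hence $J_{h,\tau}(P_h f^*)\to J(f^*)$ and is uniformly bounded. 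From $\be\|f_h^*\|_{L^2(\Om)}^2\le J_{h,\tau}(f_h^*)\le J_{h,\tau}(P_h f^*)\le C$ I obtain the claimed bound, extract a subsequence (still denoted $\{f_h^*\}$), and find $\tilde f\in L^2(\Om)$ with $f_h^*\rightharpoonup\tilde f$ in $L^2(\Om)$.

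Next I would show that $\tilde f$ minimizes $J$. Fix any $g\in L^2(\Om)$ and use $P_h g\in S_h$ as a competitor, so $J_{h,\tau}(f_h^*)\le J_{h,\tau}(P_h g)$. On the right, exactly as above, Lemma \ref{lem:integral1} (with $f_h=P_h g\rightharpoonup g$) together with $\|P_h g\|_{L^2(\Om)}\to\|g\|_{L^2(\Om)}$ gives $\lim_{h,\tau\to0}J_{h,\tau}(P_h g)=J(g)$. On the left, since $f_h^*\rightharpoonup\tilde f$, Lemma \ref{lem:integral1} forces $\tau\sum_{m=0}^M c_m\|u_h^m(f_h^*)-u^\de(\,\cdot\,,t_m)\|_{L^2(\om)}^2\to\|u(\tilde f)-u^\de\|_{L^2(\om\times(0,T))}^2$, while the weak lower semi-continuity of the $L^2$-norm yields $\liminf\be\|f_h^*\|_{L^2(\Om)}^2\ge\be\|\tilde f\|_{L^2(\Om)}^2$; combining them gives $\liminf_{h,\tau\to0}J_{h,\tau}(f_h^*)\ge J(\tilde f)$. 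Passing to the limit in the inequality yields $J(\tilde f)\le J(g)$ for every $g\in L^2(\Om)$, so $\tilde f$ is a minimizer of \eqref{c2}.

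Finally, the uniqueness established in Theorem \ref{thm:exist} identifies $\tilde f=f^*$, whence the chosen subsequence of $\{f_h^*\}$ converges weakly in $L^2(\Om)$ to the continuous minimizer, as asserted. The main obstacle is the data-fidelity term along the minimizing sequence: only weak convergence $f_h^*\rightharpoonup\tilde f$ is available, so naive continuity fails and one must rely on Lemma \ref{lem:integral1}---itself resting on the uniform $H^1$ bound of Lemma \ref{thm:forward} and the error estimate of Lemma \ref{lem:initial}---applied in the same way to both the competitor $P_h g$ and the minimizer $f_h^*$.
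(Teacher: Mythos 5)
Your proposal is correct and follows essentially the same route as the paper: uniform boundedness via the competitor $P_h f$, extraction of a weakly convergent subsequence, and the chain of inequalities combining Lemma \ref{lem:integral1} (applied both to $f_h^*\rightharpoonup \tilde f$ and to $P_h g\to g$) with weak lower semi-continuity of the norm to conclude $J(\tilde f)\le J(g)$ for all $g$. The only difference is cosmetic: you spell out the boundedness step that the paper dismisses as routine, and you invoke the uniqueness from Theorem \ref{thm:exist} explicitly to identify the limit.
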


\begin{proof}
It is routine to get the boundedness of $\{f_h^*\}$ in $L^2(\Om)$. Thus there exist $f^*\in L^2(\Om)$ and a
subsequence of $\{f_h^*\}$, still denoted by $\{f_h^*\}$, such that $f_h^*\rightharpoonup f^*$ in $L^2(\Om)$ as $h\to0$.

Now it suffices to show that $f^*$ is the minimizer of the continuous problem \eqref{c2}. For any $f\in L^2(\Om)$, we have
from \eqref{jc5} that
\begin{equation}\label{qi1}
P_hf\in S_h,\quad\lim_{h\to0}\|f-P_hf\|_{L^2(\Om)}=0.
\end{equation}
 Since $f_h^*$ is the minimizer of $J_{h,\tau}(f_h)$ over $S_h$, we obtain
\begin{equation}\label{zou20}
J_{h,\tau}(f_h^*)\le J_{h,\tau}(P_hf).
\end{equation}
Then using Lemma \ref{lem:integral1}, \eqref{qi1}, \eqref{zou20} and the lower semi-continuity of the $L^2$-norm, we deduce
\begin{align*}
J(f^*) & =\left\|u(f^*)-u^\de\right\|_{L^2(\om\times(0,T))}^2+\be\|f^*\|_{L^2(\Om)}^2\\
& \le\lim_{h,\tau\to0}\tau\sum_{m=0}^Mc_m\left\|u_h^m(f_h^*)-u^\de(\,\cdot\,,t_m)\right\|_{L^2(\om)}^2+\be\liminf_{h\to0}\|f_h^*\|_{L^2(\Om)}^2\\
& \le\liminf_{h,\tau\to0}J_{h,\tau}(f^*_h)\le\liminf_{h,\tau\to0}J_{h,\tau}(P_hf)\\
& \le\lim_{h,\tau\to0}\left\{\tau\sum_{m=0}^Mc_m\left\|u_h^m(P_hf)-u^\de(\,\cdot\,,t_m)\right\|_{L^2(\om)}^2+\be\|P_hf\|_{L^2(\Om)}^2\right\}\\
& =\left\|u(f)-u^\de\right\|_{L^2(\om\times(0,T))}^2+\be\|f\|_{L^2(\Om)}^2=J(f),
\end{align*}
which implies that $f^*$ is indeed a minimizer of the continuous problem \eqref{c2}.
\end{proof}

\section{Iterative Thresholding Algorithm and Numerical Examples}\label{sec-numer}

In this section, we develop an efficient iterative thresholding algorithm to solve the discrete minimization problem \eqref{zou3}--\eqref{zou1} and present several numerical experiments to show its efficiency and accuracy.

Nearly all effective iterative methods for solving nonlinear optimization problems need the information of the derivatives of the concerned objective functional. We shall first derive the derivative of the discrete nonlinear functional $J_{h,\tau}(f_h)$. Since $u_h^m(f_h)$ is linear with respect to $f_h$ in view of \eqref{zou3}, we have $u_h^m(f_h)'p_h=u_h^m(p_h)$ for any $p_h\in S_h$. Hence, it is straightforward to obtain
\begin{equation}\label{cc1}
J_{h,\tau}'(f_h)p_h=2\tau\sum_{m=0}^Mc_m\int_\om\left(u_h^m(f_h)-u^\de\right)u_h^m(p_h)\,\rd x+2\be\,(f_h,p_h).
\end{equation}
Needless to say, it is extremely expensive to use the above formula directly to evaluate the derivatives, since computing the derivative at one fixed point $f_h$ needs to solve equation \eqref{zou3} once for every direction $p_h\in S_h$. In order to reduce the computational costs for computing the derivatives, we recall the backward Riemann-Liouville derivative
\[
D_t^\al g(t):=-\f1{\Ga(1-\al)}\f\rd{\rd t}\int_t^T\f{g(s)}{(s-t)^\al}\,\rd s=-\f\rd{\rd t}J^{1-\al}_{T-}g(t)
\]
and introduce the following adjoint system
\begin{equation}\label{cc2}
\begin{cases}
(D_t^\al-\triangle+1)v=\mathbf1_\om\left(u(f)-u^\de\right) & \mbox{in }Q,\\
J^{1-\al}_{T-}v=0 & \mbox{in }\Om\times\{T\},\\
\pa_\nu v=0 & \mbox{on }\pa\Om\times(0,T),
\end{cases}
\end{equation}
where $\mathbf1_\om$ denotes the characterization function of $\om$. As before, we still denote the solution of \eqref{cc2} as $v(f)$ to emphasize its dependency upon $f$.

\begin{lemma}
Let $f,p\in L^2(\Om)$ and $u,v$ be the solutions of systems \eqref{ch1} and \eqref{cc2} respectively. Then there holds
\[
\int_0^T\!\!\!\int_\om\left(u(f)-u^\de\right)u(p)\,\rd x\rd t=\int_0^T\!\!\!\int_\Om p\,\mu\,v(f)\,\rd x\rd t.
\]
\end{lemma}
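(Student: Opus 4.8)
The plan is to establish the identity by testing the forward and adjoint variational formulations against each other, using the duality relation \eqref{li1} to swap the Caputo derivative on $u$ for the backward Riemann--Liouville derivative on $v$. The key observation is that $u(p)$ solves the same forward problem \eqref{ch1} with source $p\,\mu$ in place of $f\,\mu$ (by linearity), while $v(f)$ solves the adjoint system \eqref{cc2} whose right-hand side is exactly $\mathbf1_\om(u(f)-u^\de)$. Pairing the adjoint equation with $u(p)$ reproduces the left-hand side of the claimed identity, whereas pairing the forward equation for $u(p)$ with $v(f)$ reproduces the right-hand side; the two pairings coincide once the symmetric spatial part $(-\triangle+1)$ and the fractional-in-time part are reconciled.

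Concretely, I would first multiply the forward equation for $u(p)$ by $v(f)$ and integrate over $Q$, and separately multiply the adjoint equation \eqref{cc2} (written for $v(f)$) by $u(p)$ and integrate over $Q$, obtaining
\begin{align*}
\int_0^T\!\!\!\int_\Om\bigl(\pa_t^\al u(p)\,v(f)+\nb u(p)\cdot\nb v(f)+u(p)\,v(f)\bigr)\,\rd x\rd t & =\int_0^T\!\!\!\int_\Om p\,\mu\,v(f)\,\rd x\rd t,\\
\int_0^T\!\!\!\int_\Om\bigl(D_t^\al v(f)\,u(p)+\nb v(f)\cdot\nb u(p)+v(f)\,u(p)\bigr)\,\rd x\rd t & =\int_0^T\!\!\!\int_\om\bigl(u(f)-u^\de\bigr)u(p)\,\rd x\rd t.
\end{align*}
The two integrands share the symmetric terms $\nb u(p)\cdot\nb v(f)$ and $u(p)\,v(f)$, so subtracting reduces the claim to the temporal identity
\[
\int_0^T\!\!\!\int_\Om\pa_t^\al u(p)\,v(f)\,\rd x\rd t=\int_0^T\!\!\!\int_\Om D_t^\al v(f)\,u(p)\,\rd x\rd t.
\]
This is precisely the content of Lemma \ref{lem-new} applied in the spatial variable: writing $\pa_t^\al u(p)=J_{0+}^{1-\al}\f\rd{\rd t}u(p)$ and $D_t^\al v(f)=-\f\rd{\rd t}J_{T-}^{1-\al}v(f)$, the relation \eqref{li1} transfers $J_{0+}^{1-\al}$ onto $v(f)$ as $J_{T-}^{1-\al}$, and an integration by parts in $t$ moves the time derivative; the boundary contributions vanish because $u(p)(\,\cdot\,,0)=0$ from the initial condition in \eqref{ch1} and $J_{T-}^{1-\al}v(f)(\,\cdot\,,T)=0$ from the terminal condition in \eqref{cc2}.

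The main obstacle will be justifying the integration by parts in time and the application of \eqref{li1} rigorously, since these identities require sufficient temporal regularity of $u(p)$ and $v(f)$. Here I would invoke Lemma \ref{lem:y1}, which places $u(p)\in{_0}H^\al(0,T;L^2(\Om))\cap C([0,T];L^2(\Om))$, so that $u(p)(\,\cdot\,,0)=0$ makes sense and the hypotheses of Lemma \ref{lem-new} are met; the analogous regularity for the adjoint solution $v(f)$, together with its terminal condition, is supplied by the backward counterpart of that well-posedness result. A clean way to bypass the delicate pointwise boundary terms is to argue by density, approximating $u(p)$ by smooth functions vanishing at $t=0$ exactly as in the proof of Lemma \ref{lem-new}, carrying out the computation for the approximants, and passing to the limit. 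Once the temporal duality is secured, the remainder of the argument is a direct cancellation of symmetric terms and requires no further estimates.
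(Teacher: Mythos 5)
Your proposal is correct and follows essentially the same route as the paper: pair the two variational formulations with $v(f)$ and $u(p)$ as test functions, cancel the symmetric spatial terms, and reduce the claim to the temporal duality $\int_0^T\pa_t^\al u(p)\,v(f)\,\rd t=\int_0^T D_t^\al v(f)\,u(p)\,\rd t$, which is obtained via \eqref{li1} and an integration by parts whose boundary terms vanish by $u(p)(\,\cdot\,,0)=0$ and $J_{T-}^{1-\al}v(f)(\,\cdot\,,T)=0$. Your additional remarks on justifying the regularity by density are a sensible supplement that the paper leaves implicit.
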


\begin{proof}
From the variational system  \eqref{var1}, we have for any $\vp\in H^\al(0,T;L^2(\Om))\cap L^2(0,T;H^1(\Om))$ that
\begin{equation}\label{var2}
\int_0^T\!\!\!\int_\Om(\pa_t^\al u(p)\,\vp+\nb u(p)\cdot\nb\vp+u(p)\,\vp)\,\rd x\rd t=\int_0^T\!\!\!\int_\Om p\mu\,\vp\,\rd x\rd t.
\end{equation}
On the other hand, it is routine to get the variational system associated with \eqref{cc2} that for any $\psi\in H^\al(0,T;L^2(\Om))\cap L^2(0,T;H^1(\Om))$,
\begin{equation}\label{var3}
\int_0^T\!\!\!\int_\Om(D_t^\al v(f)\,\psi+\nb v(f)\cdot\nb\psi+v(f)\,\psi)\,\rd x\rd t=\int_0^T\!\!\!\int_\om\left(u(f)-u^\de\right)\psi\,\rd x\rd t.
\end{equation}
Now letting $\vp=v(f)$ and $\psi=u(p)$ in \eqref{var2} and \eqref{var3} respectively, we obtain
\begin{align}
& \quad\,\int_0^T\!\!\!\int_\Om\pa_t^\al u(p)\,v(f)\,\rd x\rd t-\int_0^T\!\!\!\int_\Om p\mu\,v(f)\,\rd x\rd t\nonumber\\
& =\int_0^T\!\!\!\int_\Om D_t^\al v(f)\,u(p)\,\rd x\rd t-\int_0^T\!\!\!\int_\om\left(u(f)-u^\de\right)u(p)\,\rd x\rd t.\label{li10}
\end{align}
Further, by the integration by parts, the relation \eqref{li1} and noting that
\[
u(p)(x,0)=0,\quad(J^{1-\al}_{T-}v(f))(\,\cdot\,,T)=0,
\]
we have
\begin{align*}
\int_\Om\int_0^T\pa_t^\al u(p)\,v(f)\,\rd t\rd x & =\int_\Om\int_0^T J^{1-\al}_{0^+}\pa_t u(p)\,v(f)\,\rd t\rd x=\int_\Om\int_0^T\pa_t u(p)\,J^{1-\al}_{T^-}v(f)\,\rd t\rd x\\
& =-\int_\Om\int_0^T u(p)\,\f\rd{\rd t}J^{1-\al}_{T^-}v(f)\,\rd t\rd x=\int_\Om\int_0^T u(p)\,D_t^\al v(f)\,\rd t\rd x.
\end{align*}
Substituting the above equality into \eqref{li10}, we arrive at the desired equality.
\end{proof}

Writing the above equality into its discrete counterpart as that in previous sections, we have
\[
\tau\sum_{m=0}^Mc_m\int_\om\left(u_h^m(f_h)-u^\de\right)u_h^m(p_h)\,\rd x=\tau\sum_{m=0}^Mc_m\,(\mu_h^mv^m_h(f_h),p_h).
\]
Therefore, we can rewrite \eqref{cc1} as
\[
J_{h,\tau}'(f_h)p_h=2\tau\sum_{m=0}^Mc_m\,(\mu_h^mv^m_h(f_h),p_h)+2\be\,(f_h,p_h)=2\left(\tau\sum_{m=0}^Mc_m\,\mu_h^mv_h^m(f_h)+\be\,f_h,p_h\right).
\]
Since $p_h\in S_h$ was chosen arbitrarily, the above equality gives the following necessary condition for some element $f_h^*\in S_h$ to be a minimizer of the discrete optimization problem \eqref{zou1}:
\[
\tau\sum_{m=0}^Mc_m\,\mu_h^mv^m_h(f_h^*)+\be\,f_h^*=0.
\]
Following the same line as that in \cite{dau04,JLLY17}, this variational equation can be solved by the following iterative thresholding algorithm
\begin{equation}\label{cc6}
f_h^{k+1}=\f L{L+\be}f_h^k-\f\tau{L+\be}\sum_{m=0}^M c_m\,\mu_h^mv^m_h(f_h^k),\quad k=0,1,\ldots,
\end{equation}
where $L>0$ is a tuning parameter for the convergence. The choice of $L$ should be larger than the operator norm of the forward operator
\[
\begin{aligned}
\mathcal A:L^2(\Om) & \longrightarrow L^2(\om\times(0,T)),\\
f & \longmapsto u(f)_{\om\times(0,T)},
\end{aligned}
\]
and we refer to \cite{JLLY17} for details. It is obvious that at each step of \eqref{cc6}, we only need to solve equation \eqref{zou3} for $u_h^m(f_h^k)$ and then the discrete formulation of system \eqref{cc2} for $v_h^m(f_h^k)$ subsequently, which does not involve heavy computational costs. By the way, although there appears the backward Riemann-Liouville derivative $D_{T-}^\al$ in \eqref{cc2}, we know that the solution $v(f)$ coincides with
the following problem with a backward Caputo derivative
\begin{equation}\label{cc7}
\begin{cases}
(\pa_{T-}^\al-\triangle+1)v=\mathbf1_\om\left(u(f)-u^\de\right) & \mbox{in }Q,\\
v=0 & \mbox{in }\Om\times\{T\},\\
\pa_\nu v=0 & \mbox{on }\pa\Om\times(0,T),
\end{cases}
\end{equation}
where
\[
\pa_{T-}^\al g(t):=-\f1{\Ga(1-\al)}\int_t^T\f{g'(s)}{(s-t)^\al}\,\rd s.
\]
Therefore, instead of \eqref{cc2} we just discretize system \eqref{cc7} to compute $v^m_h(f_h)$ by almost the same discrete method for $u^m_h(f_h)$.

We are now ready to propose the iterative thresholding algorithm for the reconstruction.

\begin{algorithm}\label{algori-itera}
Choose a tolerance $\epsilon>0$, a regularization parameter $\be>0$ and a tuning parameter $L>0$. Give an initial guess $f_h^0$, and set $k=0$.
\begin{enumerate}
\item Compute $f_h^{k+1}$ according to the iterative update \eqref{cc6}.
\item If $\|f_h^{k+1}-f_h^k\|_{L^2(\Om)}/\|f_h^k\|_{L^2(\Om)}<\epsilon$, stop the iteration. Otherwise, update $k\leftarrow k+1$ and return to Step 1.
\end{enumerate}
\end{algorithm}

Next we shall present several numerical experiments by applying Algorithm \ref{algori-itera} to solve Problem \ref{prob-ISP} for $d=1,2$. In general, we set $\Om=(0,1)^d$, $T=1$ and specify various parameters involved in Algorithm \ref{algori-itera} as follows. Without loss of generality, we always choose the initial guess $f_h^0$ as a constant, e.g., $f_h^0\equiv2$. With the true source term $f^*$ and thus the noiseless data $u(f^*)$, the noisy data $u^\de$ is generated as
\[
u^\de(x,t)=(1+\de\,\mathrm{rand}(-1,1))\,u(f^*)(x,t),\quad x\in\om,\ 0<t\le T,
\]
where $\mathrm{rand}(-1,1)$ denotes the random number uniformly distributed on $[-1,1]$. For simplicity, in all examples we set the regularization parameter $\be=10^{-4}$. Besides the illustrative figures, we mainly evaluate the numerical performance of Algorithm \ref{algori-itera} by the number $K$ of iterations and the relative $L^2$ error
\[
\mathrm{err}:=\f{\left\|f_h^K-f^*\right\|_{L^2(\Om)}}{\|f^*\|_{L^2(\Om)}},
\]
where we understand $f_h^K$ as the result of the numerical reconstruction.

In the one-dimensional case, we divide the space-time region $\overline\Om\times[0,T]=[0,1]^2$ into $40\times40$ equidistant meshes. We set the tuning parameter $L$, the stopping criteria $\epsilon$ and the known component $\mu(t)$ as $L=1$, $\epsilon=2\times10^{-3}$ and $\mu(t)=5+10\,t$ respectively in Algorithm \ref{algori-itera}. Except for the factors mentioned above, we shall test the numerical performance with different choices of exact source terms $f^*$, fractional orders $\al$, noise levels $\de$ and observation subdomains $\om$.

\begin{example}\label{ex1.1}
In this example, we fix the observation subdomain $\om$ and the noise level $\de$ as $\om=\Om\setminus[1/20,19/20]$ and $\de=1\%$, respectively.
We test Algorithm \ref{algori-itera} with different fractional orders $\al$ and exact source terms $f^*$ as follows:
\begin{enumerate}
\item[{\rm(a)}] $\al=0.3$, $f^*(x)=\sin(\pi x/2)+x^2+1$.
\item[{\rm(b)}] $\al=0.5$, $f^*(x)=\sin(\pi x)-2$.
\end{enumerate}
Figure \ref{a1} compares the true source terms $f^*$ with the corresponding reconstructions $f_h^K$, and also shows the iteration steps $K$ and the relative errors in the caption.
\begin{figure}[htbp]\centering
\includegraphics[trim=12mm 8mm 12mm 6mm,clip=true,width=.47\textwidth]{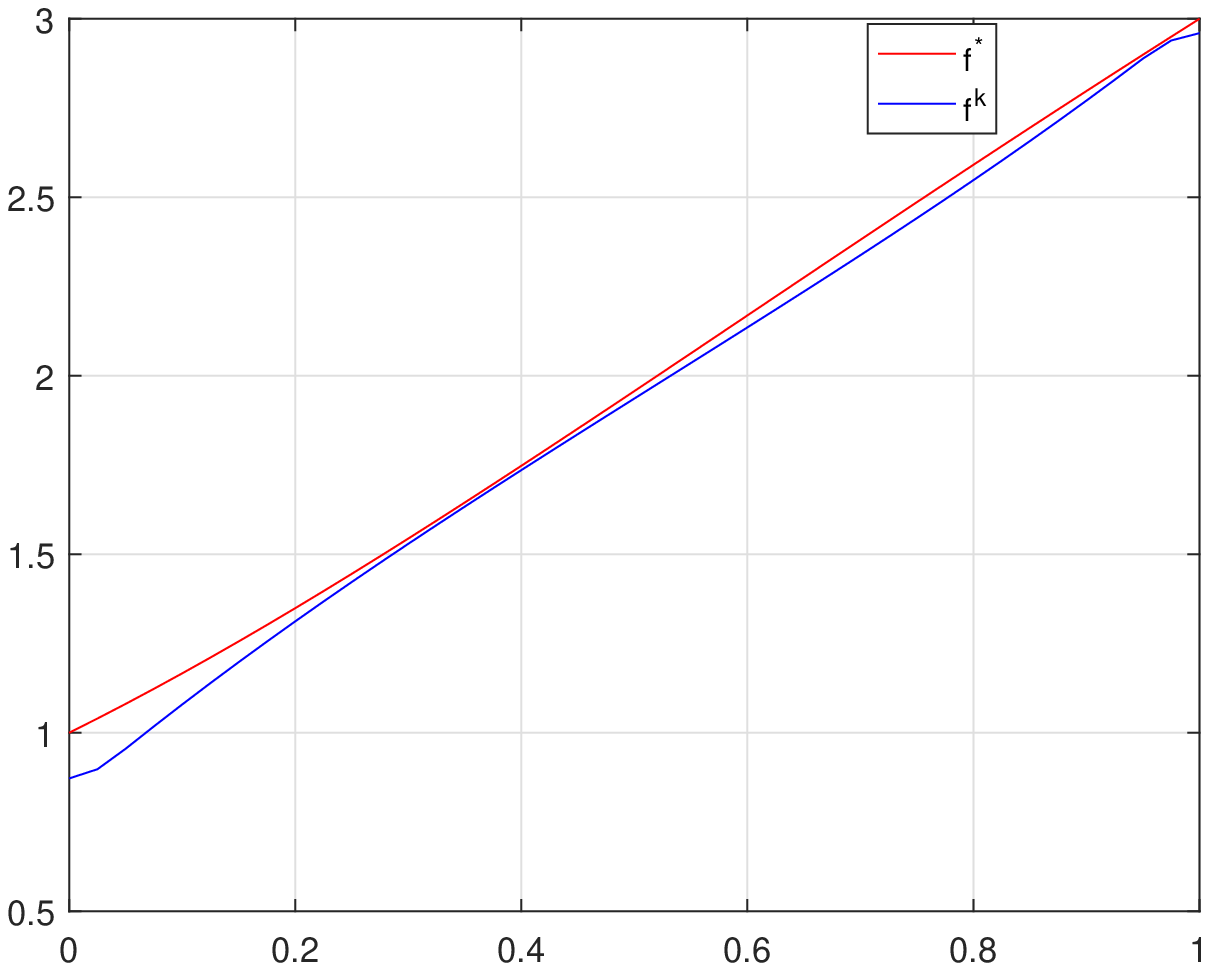}
\qquad
\includegraphics[trim=12mm 8mm 12mm 6mm,clip=true,width=.47\textwidth]{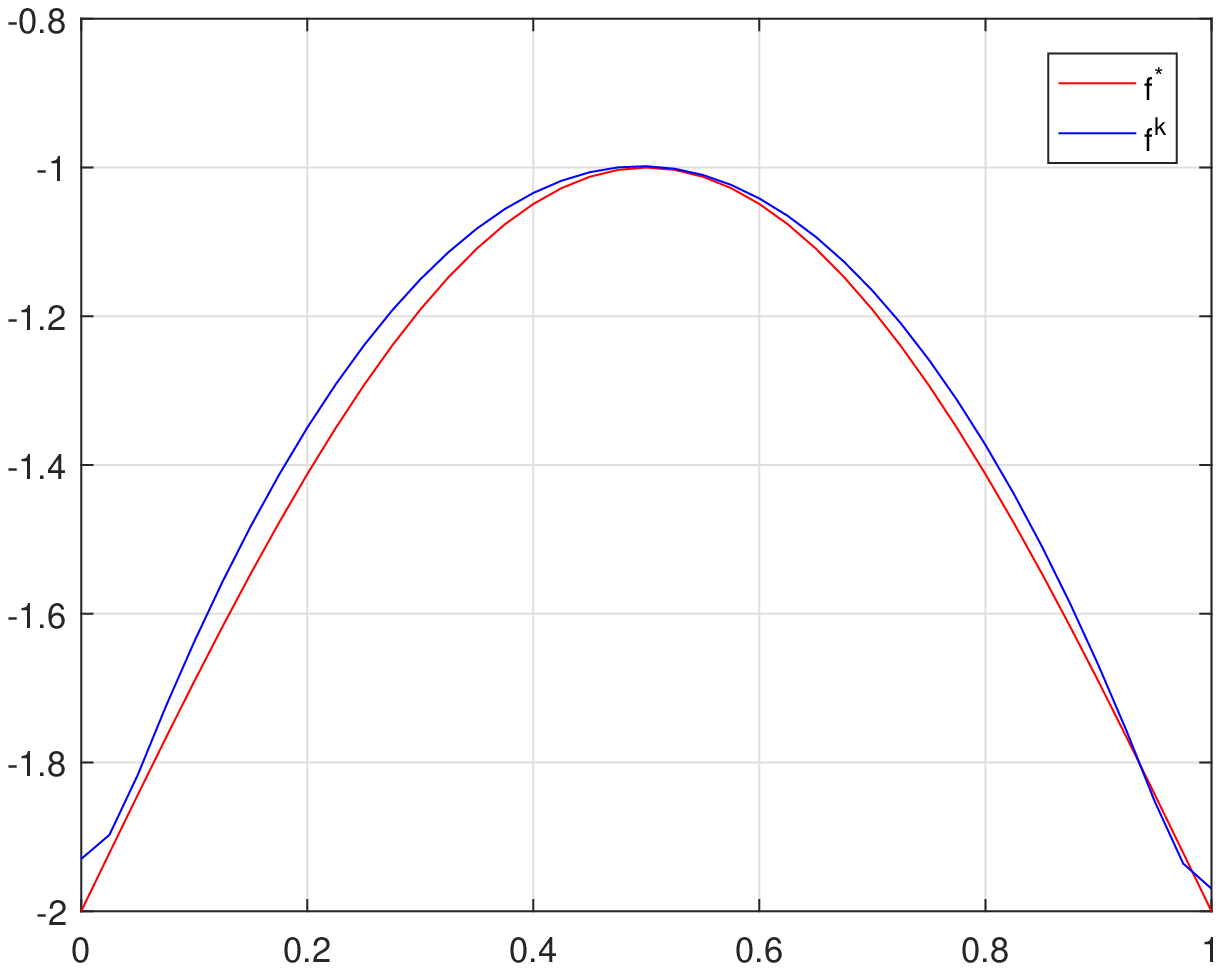}\\
\caption{True source terms $f^*$ and their reconstructions $f_h^K$ obtained in Example \ref{ex1.1}. Left: Case (a), $K=29$, $\mathrm{err}=2.53\%$. Right: Case (b), $K=4$, $\mathrm{err}=2.51\%$.}\label{a1}
\end{figure}
\end{example}

\begin{example}\label{ex1.5}
In this example, we fix $\al=0.8$ and the true source term $f^*(x)=-\sin(\pi x)+x+4$, and test different combinations of the observation subdomain $\om$ and the noise level $\de$ to observe their influence on the numerical performance. We first fix the noise level $\de=1\%$ and shrink the size of $\om$ from $\Om\setminus[1/10,9/10]$, $\Om\setminus[1/20,19/20]$ to $\Om\setminus[1/40,39/40]$. Next, we keep $\om=\Om\setminus[1/20,19/20]$ and enlarge the noise level $\de$ from $0.5\%$, $1\%$, $2\%$ to $4\%$. The resulting numerical performance of the reconstructions is listed in Table \ref{tab1}.
\begin{table}[htbp]\centering
\caption{Numerical performance in Example \ref{ex1.5} under various combinations of the observation subdomain $\om$ and the noise level $\de$.}\label{tab1}\medskip
\begin{tabular}{ll|ll}
\hline\hline
$\de$ & $\om$ & $\mathrm{err}$ & $K$\\
\hline
$1\%$ & $\Om\setminus[1/10,9/10]$ & $3.75\%$ & $13$\\
$1\%$ & $\Om\setminus[1/20,19/20]$ & $3.38\%$ & $18$\\
$1\%$ & $\Om\setminus[1/40,39/40]$ & $4.17\%$ & $27$\\
\hline
$0.5\%$ & $\Om\setminus[1/20,19/20]$ & $2.96\%$ & $16$\\
$2\%$ & $\Om\setminus[1/20,19/20]$ & $5.53\%$ & $20$\\
$4\%$ & $\Om\setminus[1/20,19/20]$ & $11.17\%$ & $24$\\
\hline\hline
\end{tabular}
\end{table}
\end{example}

We can see from Figures \ref{a1} that with different fractional orders $\al$ and $1\%$ noise in the observation data, the numerical reconstructions $f_h^K$ appear to be quite satisfactory in view of the ill-posedness of Problem \ref{prob-ISP}, regardless of the casual choice of the initial guess $f_h^0$. In addition, we can observe from Table \ref{tab1} that Algorithm \ref{algori-itera} has two important advantages, namely, its strong robustness against the oscillating noise in observation data, and its insensitivity to the smallness of the observation subdomain $\om$.

Now we consider the more challenging two-dimensional case, where we divide the space-time region $\overline\Om\times[0,T]=[0,1]^3$ into $40\times40\times20$ equidistant meshes. Here we set the tuning parameter $L$ and the known component $\mu(t)$
in Algorithm \ref{algori-itera} as $L=2$ and $\mu(t)=1+10\pi\,t^2$ respectively. Analogously to the one-dimensional counterpart, we evaluate the numerical performance of Algorithm \ref{algori-itera} from various aspects, including different combinations of the true source terms, noise levels and observation subdomains.

\begin{example}\label{ex2.1}
Parallel to Example \ref{ex1.1}, first we fix the observation subdomain $\om$ and the noise level $\de$ as $\om=\Om\setminus[1/10,9/10]^2$ and $\de=1\%$, respectively. We take the stopping criteria $\epsilon=\de/3$ and specify two pairs of fractional orders and true source terms as follows:
\begin{enumerate}
\item[{\rm(a)}] $\al=0.3$, $f^*(x)=\sin(x_1)+\sin(x_2)+1$.
\item[{\rm(b)}] $\al=0.5$, $f^*(x)=\cos(\pi x_1)\cos(\pi x_2)+2$.
\end{enumerate}
In Figure \ref{b1}, we illustrate the true source terms $f^*$ with the corresponding reconstructions $f_h^K$, and the iteration steps $K$ and the relative errors are shown in the caption.
\begin{figure}[htbp]\centering
\includegraphics[trim=12mm 6mm 12mm 8mm,clip=true,width=.47\textwidth]{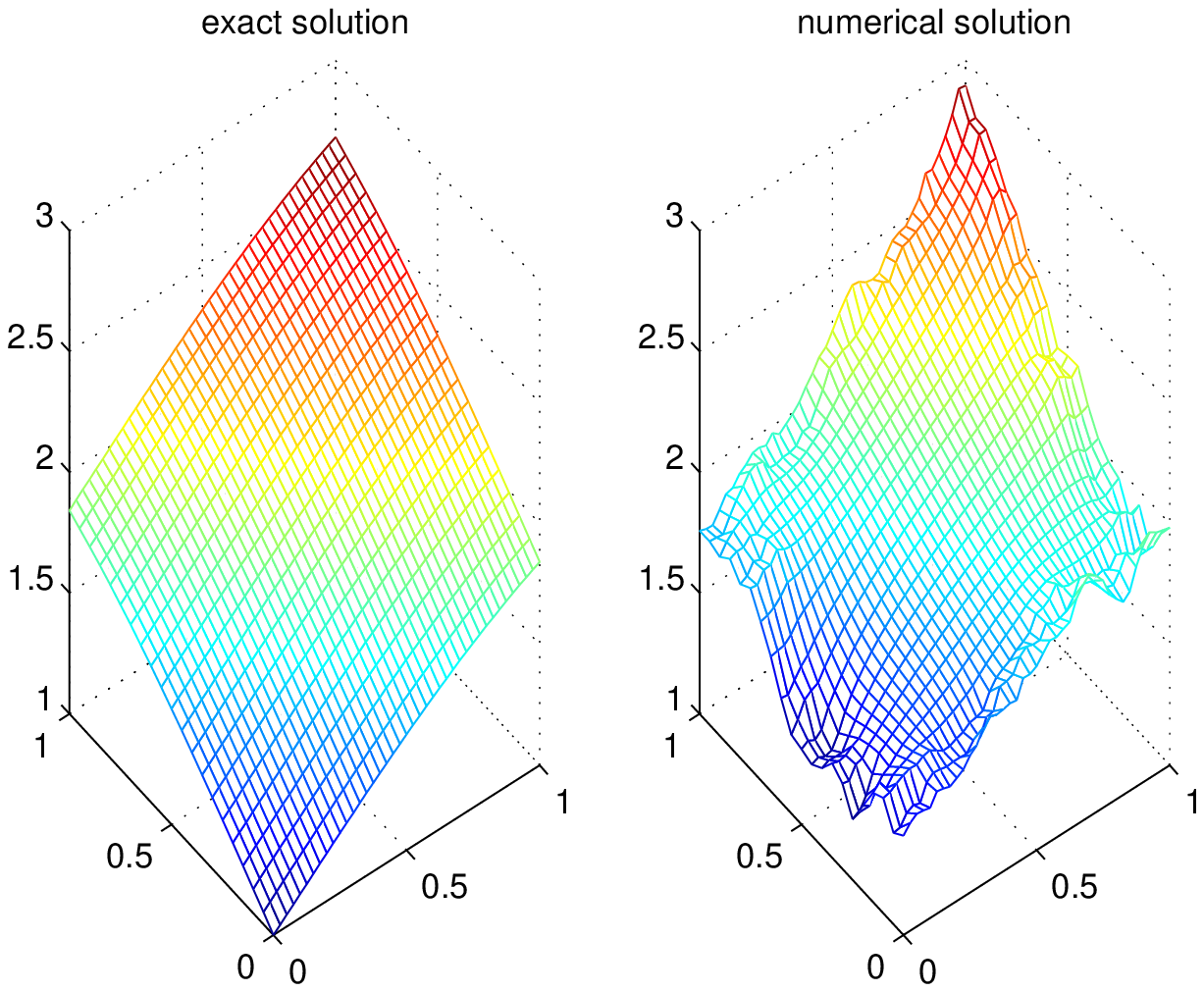}
\qquad
\includegraphics[trim=12mm 6mm 12mm 8mm,clip=true,width=.47\textwidth]{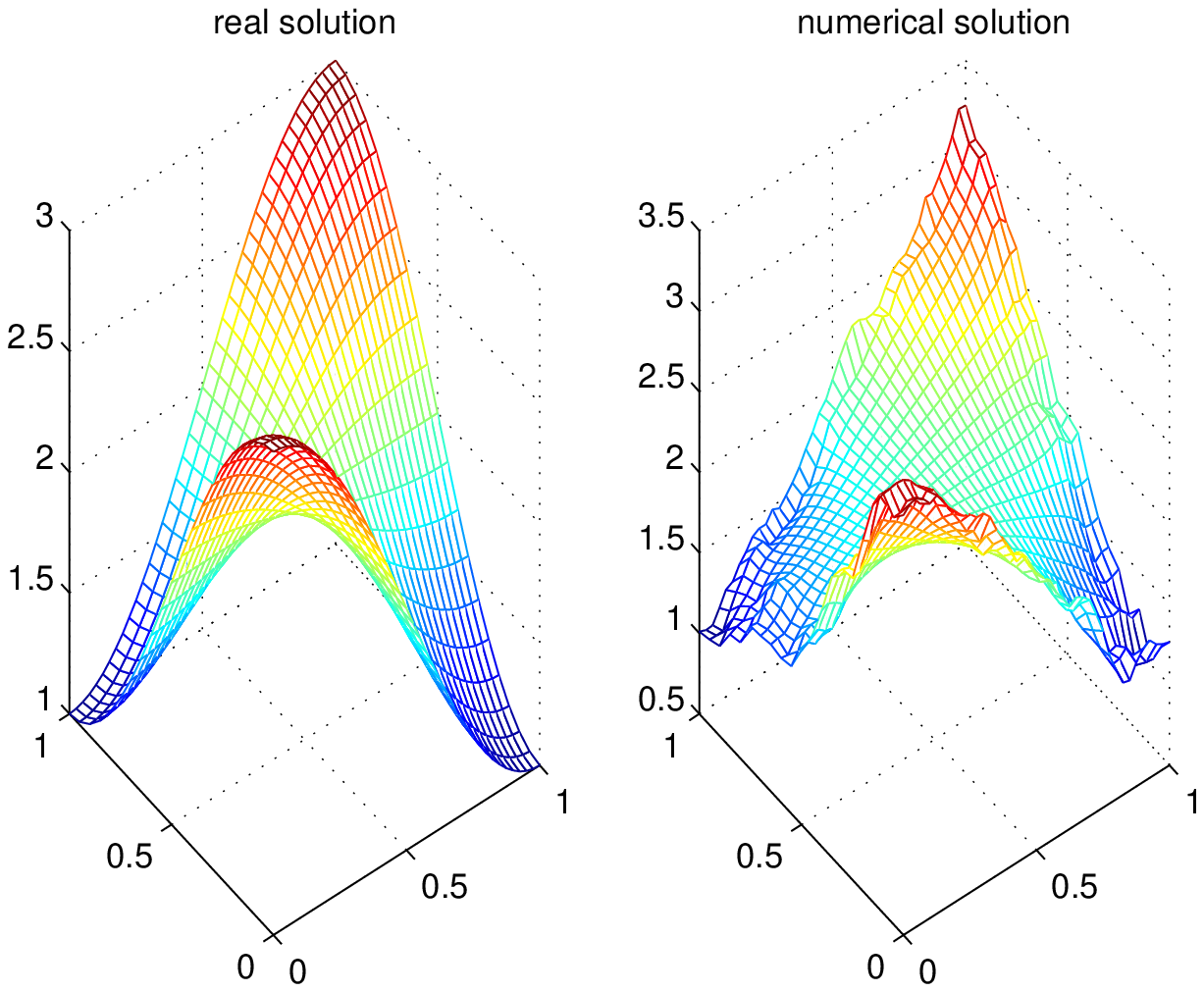}\\
\caption{True source terms $f^*$ and their reconstructions $f_h^K$ obtained in Example \ref{ex2.1}. Left: Case (a), $K=20$, $\mathrm{err}=6.26\%$. Right: Case (b), $K=36$,  $\mathrm{err}=7.17\%$.}\label{b1}
\end{figure}
\end{example}

\begin{example}\label{ex2.3}
In this example we fix $\al=0.8$, $f^*(x)=\exp((x_1+x_2)/4)+1$ and test Algorithm \ref{algori-itera} with various choices of noise levels $\de$ and observation subdomains $\om$ in a similar manner as that in Example \ref{ex1.5}. Here we set $\epsilon=\de/5$ as the stopping criteria. For the choice of $\om$, we not only adjust its size, but also change its coverage of the boundary $\pa\Om$. The resulting numerical performance of the reconstructions is listed in Table \ref{diff2}.
\begin{table}[htbp]\centering
\caption{Numerical performance in Example \ref{ex2.3} under various combinations of the observation subdomain $\om$ and the noise level $\de$.}\label{diff2}\medskip
\begin{tabular}{ll|ll}
\hline\hline
$\de$ & $\om$ & $\mathrm{err}$ & $K$\\
\hline
$1\%$ & $\Om\setminus[1/10,9/10]^2$ & $2.63\%$ & $13$\\
$1\%$ & $\Om\setminus[1/20,19/20]^2$ & $4.94\%$ & $27$\\
$1\%$ & $\Om\setminus[0,9/10]\times[1/10,9/10]$ & $4.36\%$ & $9$\\
\hline
$0.5\%$ & $\Om\setminus[1/10,9/10]^2$ & $2.47\%$ & $24$\\
$1\%$ & $\Om\setminus[1/10,9/10]^2$ & $3.61\%$ & $15$\\
$2\%$ & $\Om\setminus[1/10,9/10]^2$ & $5.06\%$ & $7$\\
$4\%$ & $\Om\setminus[1/10,9/10]^2$ & $6.58\%$ & $5$\\
\hline\hline
\end{tabular}
\end{table}
\end{example}

It is readily seen from Examples \ref{ex2.1}--\ref{ex2.3} that Algorithm \ref{algori-itera} also works efficiently and accurately in the two-dimensional case. It inherits almost all advantages witnessed in the one-dimensional tests in view of its strong robustness against oscillating noise as well as its insensitivity to the smallness of the observation subdomain.\bigskip

\noindent{\bf Acknowledgement}\ \ The authors thank Professor Bangti Jin (University College London) for his constructive discussions, and appreciate the  valuable comments by the anonymous referees.\bigskip

\noindent{\bf Funding information}\ \ The first author is supported by National Natural Science Foundation of China (NSFC, Nos. 11871240, 11401241, 11571265) and NSFC-RGC (China-Hong Kong, No. 11661161017).
The second author is supported by Japan Society for the Promotion of Science KAKENHI Grant Number JP15H05740.
The last author is supported by NSFC (Nos. 11871057 and 11501447).

\end{document}